\newtheorem{theorem}{Theorem}[section]
\newtheorem{proposition}[theorem]{Proposition}
\newtheorem{corollary}[theorem]{Corollary}
\newtheorem{question}[theorem]{Question}
\theoremstyle{remark}
\newtheorem{rmks}{Remarks}[section]
\newtheorem*{Examples}{Examples}
\newcommand{\B}{\mathcal{B}}
\renewcommand{\P}{\mathcal{P}}
\newcommand{\U}{\mathcal{U}}
\newcommand{\MLR}{\mathsf{MLR}}
\newcommand{\dom}{\text{dom}}
\newcommand{\KA}{\mathit{KA}}
\newcommand{\llb}{\llbracket}
\newcommand{\rrb}{\rrbracket}
\definecolor{purple}{rgb}{.9,0.2,.9}
\newcommand{\eqdef}{\mathrel{\mathop:}=}
\newcommand{\cs}{2^\omega}
\newcommand{\uh}{{\upharpoonright}}
\renewcommand{\phi}{\varphi}
\newcommand{\str}{2^{<\omega}}
\title{Effective Aspects of  Bernoulli Randomness}
\author{Christopher P.\ Porter}
\address{Department of Mathematics and Computer Science\\
Drake University\\
Des Moines, IA 50311\\ 
USA}
\email{christopher.porter@drake.edu}
\date{} 
\begin{document}

\begin{abstract}
In this paper, we study Bernoulli random sequences, i.e., sequences that are Martin-L\"of random with respect to a Bernoulli measure $\mu_p$ for some $p\in[0,1]$, where we allow for the possibility that $p$ is noncomputable.  We focus in particular on the case in which the underlying Bernoulli parameter $p$ is proper (that is, Martin-L\"of random with respect to some computable measure).  We show for every Bernoulli parameter $p$, if there is a sequence that is both proper and Martin-L\"of random with respect to $\mu_p$, then $p$ itself must be proper, and explore further consequences of this result.  We also study the Turing degrees of Bernoulli random sequences, showing, for instance, that the Turing degrees containing a Bernoulli random sequence do not coincide with the Turing degrees containing a Martin-L\"of random sequence.  Lastly, we consider several possible approaches to characterizing blind Bernoulli randomness, where the corresponding Martin-L\"of tests do not have access to the Bernoulli parameter $p$, and show that these fail to characterize blind Bernoulli randomness.
\end{abstract}

\maketitle

\section{Introduction}

Algorithmic randomness with respect to biased probability measures on the Cantor space $\cs$ has been studied in two separate strands:  with respect to computable measures, such as in \cite{BieMer09}, \cite{BiePor12}, and \cite{HolPor17}, and with respect to non-computable measures, such as in \cite{DayMil13} and \cite{ReiSla18} (see the recent survey \cite{Por19} for an overview of both approaches).  In this article, we study the interaction of these two strands in the context of Bernoulli measures on $\cs$.  Recall that, for $p\in[0,1]$, the Bernoulli $p$-measure $\mu_p$ is defined by $\mu_p(\llb\sigma\rrb)=p^{\#_0(\sigma)}(1-p)^{\#_1(\sigma)}$
for each basic open subset $\llb\sigma\rrb$ of $\cs$, where, for $i\in\{0,1\}$, $\#_i(\sigma)$ is the number of occurrences of the symbol $i$ in $\sigma$. We refer to such a $p$ as a \emph{Bernoulli parameter}.  We will refer to Martin-L\"of randomness with respect to some Bernoulli measure as \emph{Bernoulli randomness.}

The most significant work on the topic of Bernoulli randomness is due to Kjos-Hanssen \cite{Kjo10}, who focuses in particular on randomness with respect to non-computable Bernoulli measures.  On the standard approach to defining Martin-L\"of randomness with respect to a non-computable measure, one includes the measure as an oracle to be used in enumerating the corresponding Martin-L\"of tests (or, more precisely, a sequence that encodes the values of the measure on basic open sets in some effective way).  In the case of defining Bernoulli randomness with respect to a non-computable Bernoulli parameter $p$, it suffices to use $p$ as an oracle in the definition of the corresponding Martin-L\"of tests.

 By contrast with the standard approach, Kjos-Hanssen considers \emph{blind} Bernoulli randomness, i.e.\ Bernoulli randomness defined in terms of Martin-L\"of tests that do not have access to the Bernoulli parameter $p$.  His two main findings are the following:  First, he shows that for $p\in[0,1]$, every Bernoulli $p$-random sequence computes the Bernoulli parameter $p$.  Second, using this first result, he shows that the standard notion of Bernoulli $p$-randomness (given in terms of tests that have access to the oracle $p$) coincides with the notion of blind Bernoulli $p$-randomness. 
 
In this study, we extend Kjos-Hanssen's investigations in several respects.  First, in Section \ref{sec-3} we study the behavior of sequences that are Bernoulli random with respect to an algorithmically random Bernoulli parameter $p$.  It follows from work of V'yugin \cite{Vyu12} (building upon work in \cite{VovVyu93}) and, independently, Hoyrup \cite{Hoy13}, that if a sequence $x$ is Bernoulli random with respect to a Martin-L\"of random Bernoulli random parameter $p$, that $x$ is \emph{proper}, that is, random with respect to a computable measure on $\cs$.  Here, by a careful analysis of Kjos-Hanssen's first result discussed above, we prove a partial converse of this result:  If a proper sequence $x$ is Bernoulli random with respect to some parameter $p$, then $p$ itself must be proper as well.  We then explore a number of consequences of this theorem.

Second, by our first main result, every Bernoulli random sequence that is proper has Martin-L\"of random Turing degree (i.e., is Turing equivalent to a Martin-L\"of random sequence).  This raises a number of questions about the Turing degrees of Bernoulli random sequences, which we consider in Section \ref{sec-4}.  For instance, we show that for every parameter $p$, there is a Bernoulli $p$-random sequence that has Martin-L\"of random Turing degree, and by contrast, there is some parameter $p$ and some Bernoulli $p$-random sequence that does not have Martin-L\"of random Turing degree.

Lastly, in Section \ref{sec-5} we study several candidate definitions of blind Bernoulli randomness given in terms of initial segment complexity and supermartingales.  It is well-known that a sequence is Bernoulli $p$-random if and only if $\mathit{KA}^p(x\uh n)\geq -\log\mu_p(X\uh n)-O(1)$, where $\mathit{KA}(\sigma)$ is the a priori complexity of $\sigma\in\str$ (defined below in Section \ref{sec-5}).  Here we show that the weaker condition $\mathit{KA}(x\uh n)\geq -\log\mu_p(x\uh n)-O(1)$ does not imply that $x$ is Bernoulli $p$-random, which shows that this condition does not provide a notion of blind randomness in terms of a priori complexity that is equivalent to Kjos-Hanssen's original definition of blind randomness.  From this result, we can easily derive the conclusion that (1) a similar condition in terms of prefix-free Kolmogorov complexity and (2) a notion of randomness defined in terms  of blind supermartingales both fail to characterize Bernoulli randomness.


Before establishing the above-described results, in the following section we provide the requisite background.

\section{Background}\label{sec-2}

\subsection{Notation and Measures}
The set of finite binary strings is denoted $\str$. The space of all
infinite binary sequences is denoted $\cs$ and comes equipped with the
product topology generated by the clopen sets $  \llb\sigma\rrb= \{x \in \cs: \sigma\prec x\}$, where $\sigma \in \str$ and $\sigma \prec x$ means $\sigma$ is an
initial segment of $x$.

A \emph{(probability) measure} $\mu$ on $\cs$ is a function that
assigns to each Borel subset of $\cs$ a number in the unit interval
$[0,1]$ and satisfies
$\mu(\bigcup_{i \in \omega} \B_{i})= \sum_{i \in \omega} \mu(\B_{i})$
whenever the $\B_{i}$'s are pairwise disjoint Borel subsets of $\cs$.  Carath\'{e}odory's
extension theorem guarantees that the conditions
\begin{itemize}
  \item $\mu(\cs)= 1$ and
  \item
  $\mu(\llb\sigma\rrb) = \mu(\llb\sigma0\rrb) + \mu(\llb\sigma1\rrb)$
  for all $\sigma \in \str$
\end{itemize}
uniquely determine a measure on $\cs$\!.  We thus identify a measure
with a function ${\mu\colon \str \to [0,1]}$ satisfying the above
conditions and $\mu(\sigma)$ is often written instead of
$\mu(\llb\sigma\rrb)$.  The Lebesgue measure $\lambda$ on $\cs$ is
defined by $\lambda(\sigma) = 2^{-|\sigma|}$ for each string
$\sigma\in \str$, where $|\sigma|$ denotes the length of $\sigma$. The space of all measures on $2^{\omega}$ is denoted $\mathscr{P}(\cs)$.  

\subsection{Some computability theory}
\label{sec:some-comp-theory}

We assume the reader is familiar with the basic concepts of
computability theory as found, for instance, in the early chapters of
\cite{Nie09} or \cite{DowHir10}.  We review a few useful concepts.

A $\Sigma^0_1$ \emph{class} $S\subseteq \cs$ is an effectively open
set, i.e., an effective union of basic clopen subsets of $\cs$\! and a
$\Pi^{0}_{1}$ \emph{class} is the compliment of a $\Sigma^{0}_{1}$
class.

A map $\Phi\colon \subseteq \cs \to \cs$ is a Turing functional if there is an oracle Turing machine that when given
$x \in \dom(\Phi)$ (as an oracle) and $k \in \omega$ outputs
$\Phi(x)(k)$ (unless $\Phi(x)(k)$ is undefined). Relativization of this notion to some $z\in\cs$ leads to the notion of a $z$-computable
function.

A measure $\mu$ on $\cs$ is computable if $\mu(\sigma)$ is a
computable real number, uniformly in $\sigma\in 2^{<\omega}$\!.  Note that the Bernoulli $p$-measure $\mu_p$ (as defined in the introduction)
is computable if and only if the Bernoulli parameter $p\in[0,1]$ is computable.
If
$\mu$ is a computable measure on $\cs$ and
$\Phi\colon \subseteq \cs \to \cs$ is a Turing functional defined on
a set of $\mu$-measure one, then the \emph{pushforward measure}
$\mu_\Phi$ defined by
\begin{displaymath}
  \mu_\Phi(\sigma)=\mu(\Phi^{-1}(\sigma))
\end{displaymath}
for each $\sigma\in \str$ is a computable measure.

%
%
%

\subsection{Martin-L\"of randomness with respect to various measures}
 Recall that for a fixed computable measure $\mu$ on $\cs$\! and $z\in \cs$\!, a 
 \emph{$\mu$-Martin-L\"of test relative to $z$} (or simply a \emph{$\mu$-test relative to $z$}) is a uniformly
    $\Sigma^{0,z}_{1}$ sequence $(\U_{i})_{i\in\omega}$ of subsets of
    $\cs$ with $\mu (\U_{n}) \leq 2^{-i}$\! for every $i\in\omega$. $x\in \cs$ passes such a test $(\U_{i})_{i\in\omega}$
    if $x\notin \bigcap_{i\in\omega} \U_{i}$ and $x$ is $\mu$-Martin-L\"of random relative to
    $z$ if $x$ passes every $\mu$-Martin-L\"of test relative to
    $z$. The set of all such $x$'s is denoted by $\MLR_{\mu}^{z}$.  As in the introduction, we say that $x\in\cs$ is proper if $x$ is Martin-L\"of random with respect to some computable measure.
Lastly, for each choice of $\mu$ and $z$ as above, there is a single, \emph{universal}, $\mu$-test relative to $z$,
$(\U_{i})_{i\in \omega}$ such that $x \in \MLR_{\mu}^{z}$ if and only if $x$ passes $(\U_{i})_{i\in \omega}$. 


For $p\in[0,1]$, a sequence $x\in\cs$ is \emph{Bernoulli $p$-random} if it passes every $\mu_p$-Martin-L\"of test relative to $p$.  The collection
of Bernoulli $p$-random sequences will be written as $\MLR_{\mu_p}$ (we suppress the oracle $p$).
We say that $x\in\cs$ is a \emph{Bernoulli random} if it is Bernoulli $p$-random for some $p\in[0,1]$.

In the next section, we will briefly consider Martin-L\"of random members of  $\mathscr{P}(\cs)$, which can be defined by defining the notion of a Martin-L\"of test in the setting of $\mathscr{P}(\cs)$ (or more generally, any computable metric space, as in, for instance, \cite{Hoy13}), or by representing each measure by a  sequence and defining a random measure to be one that has a random representing sequence (as in \cite{Cul15}).  The former is a straightforward extension of the definition of Martin-L\"of randomness for computable measures on $\cs$; as we will not formally define such tests in the sequel, we omit the details.

We conclude this section with a pair of useful results concerning the interaction between Turing functionals and Martin-L\"of randomness:

\begin{theorem}\label{thm-pres}
  Let $\Phi\colon \subseteq \cs \to \cs$ be a Turing functional and $\mu\in\mathscr{P}(\cs)$ be computable with
  $\mu(\dom(\Phi))=1$.
  \begin{itemize}
    \item[(i)] (Preservation of Randomness \cite{ZvoLev70}) If $x \in \MLR_{\mu}$ then
    $\Phi(x) \in \MLR_{\mu_\Phi}$.
    \item[(ii)] (No
  Randomness Ex Nihilo \cite{She86}) If $y \in \MLR_{\mu_\Phi}$, then there is some
    $x\in \MLR_{\mu}$ such that $\Phi(x)=y$.
  \end{itemize}
  \label{thm:PoR-and-NReN}
\end{theorem}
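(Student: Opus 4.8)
The plan is to prove the two parts separately, both resting on a common preliminary observation. Since $\mu(\dom(\Phi)) = 1$, the complement $\cs \setminus \dom(\Phi) = \bigcup_{k}\{x : \Phi(x)(k)\diverge\}$ is a $\Sigma^0_2$ class of $\mu$-measure zero; such a class is $\mu$-Martin-L\"of null (each constituent $\Pi^0_1$ class of measure zero is covered by a $\mu$-test because $\mu$ is computable, and one combines these uniformly), so every $x \in \MLR_{\mu}$ lies in $\dom(\Phi)$ and $\Phi(x)$ is genuinely defined. I will use this throughout, together with the fact that $\Phi$ restricted to $\dom(\Phi)$ is continuous (each set $\{x : \Phi(x)(m)\halts = b\}$ is open).

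For (i) I would argue contrapositively. Suppose $x \in \dom(\Phi)$ but $\Phi(x) \notin \MLR_{\mu_\Phi}$, as witnessed by a $\mu_\Phi$-test $(\V_i)_{i\in\omega}$ with $\Phi(x) \in \bigcap_i \V_i$, and put $\U_i = \Phi^{-1}(\V_i)$. Writing $\V_i$ as a disjoint union $\bigsqcup_{\tau \in A_i}\llb\tau\rrb$ over a uniformly c.e.\ antichain $A_i$, the preimages $\Phi^{-1}(\llb\tau\rrb)$ are pairwise disjoint, uniformly $\Sigma^0_1$, and satisfy $\mu(\Phi^{-1}(\llb\tau\rrb)) = \mu_\Phi(\llb\tau\rrb)$; hence $(\U_i)_{i\in\omega}$ is a $\mu$-test with $\mu(\U_i) = \mu_\Phi(\V_i) \leq 2^{-i}$. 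Since $x \in \dom(\Phi)$ and $\Phi(x) \in \bigcap_i\V_i$, we get $x \in \bigcap_i\U_i$, contradicting $x \in \MLR_{\mu}$.

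For (ii), I would first set aside two degenerate situations: if $\mu_\Phi(\llb y\uh n\rrb) = 0$ for some $n$ then $y$ is patently not $\mu_\Phi$-random, so assume all these values are positive; and if $\mu_\Phi(\{y\}) > 0$ then $\mu(\Phi^{-1}(\{y\})) = \mu_\Phi(\{y\}) > 0 = \mu(\cs\setminus\MLR_{\mu})$, so $\Phi^{-1}(\{y\})$ already meets $\MLR_{\mu}$ and we are done. So assume $\mu_\Phi(\{y\}) = 0$ and, for contradiction, that $\Phi^{-1}(\{y\}) \cap \MLR_{\mu} = \emptyset$, i.e.\ $\Phi^{-1}(\{y\}) \subseteq \bigcap_i\U_i$ for the universal $\mu$-test $(\U_i)$. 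The idea is to push this test forward: set $\V_i = \bigcup\{\llb\tau\rrb : \mu(\Phi^{-1}(\llb\tau\rrb)\cap\U_i) > \tfrac12\mu_\Phi(\llb\tau\rrb)\}$. Since $\mu$ is computable and $\Phi^{-1}(\llb\tau\rrb)\cap\U_i$ is uniformly $\Sigma^0_1$, the condition on $\tau$ is c.e., so the $\V_i$ are uniformly $\Sigma^0_1$; summing the defining inequality over a prefix-free generating antichain (on which the preimages $\Phi^{-1}(\llb\tau\rrb)$ are disjoint) yields $\mu_\Phi(\V_i) < 2\mu(\U_i) \leq 2^{-i+1}$, so $(\V_{i+1})_{i\in\omega}$ is a $\mu_\Phi$-test. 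The proof is complete once we show $y \in \bigcap_i\V_i$.

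The crux --- and the step I expect to be the main obstacle --- is showing $y \in \V_i$ for every $i$, i.e.\ that some $G_n := \Phi^{-1}(\llb y\uh n\rrb)$ has strictly more than half of its $\mu$-mass inside $\U_i$. The $G_n$ are open and decreasing with $\bigcap_n G_n = \Phi^{-1}(\{y\}) \subseteq \U_i$, but because $\Phi$ is partial this intersection need not be compact, blocking a direct finite-intersection argument. Here the preliminary observations save us: by continuity of $\Phi$ on $\dom(\Phi)$, any point $x$ of $\bigcap_n\overline{G_n}$ lying in $\dom(\Phi)$ satisfies $\Phi(x)\uh n = y\uh n$ for all $n$, hence belongs to $\Phi^{-1}(\{y\}) \subseteq \U_i$; and any point of $\bigcap_n\overline{G_n}$ outside $\dom(\Phi)$ lies in $\cs\setminus\dom(\Phi) \subseteq \cs\setminus\MLR_{\mu} \subseteq \U_i$. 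Thus $\bigcap_n\overline{G_n} \subseteq \U_i$, and since the $\overline{G_n}$ are compact and decreasing we get $\overline{G_m} \subseteq \U_i$ for some $m$; then $\mu(G_m\cap\U_i) = \mu(G_m) = \mu_\Phi(\llb y\uh m\rrb) > 0$, so $y\uh m$ witnesses $y \in \V_i$. This contradicts $y \in \MLR_{\mu_\Phi}$ and completes the proof.
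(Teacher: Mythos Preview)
The paper does not prove this theorem; it is stated as a background result with citations to Zvonkin--Levin and Shen, and no proof is given. So there is no ``paper's own proof'' to compare against. That said, your argument is correct and follows the standard route for both parts: pulling back a $\mu_\Phi$-test along $\Phi$ for (i), and for (ii) pushing the universal $\mu$-test forward via the ``more than half the mass'' criterion and using compactness of the closures $\overline{G_n}$ to show $y$ is captured. Two small remarks: in part (ii), summing the strict inequalities $\mu_\Phi(\llb\tau\rrb) < 2\mu(\Phi^{-1}(\llb\tau\rrb)\cap\U_i)$ over a possibly infinite antichain yields only $\mu_\Phi(\V_i) \leq 2\mu(\U_i)$ rather than $<$, but this is harmless for the test bound; and your handling of the boundary points $\overline{G_n}\setminus G_n$ via the dichotomy $x\in\dom(\Phi)$ versus $x\notin\dom(\Phi)\subseteq\bigcap_i\U_i$ is exactly the right way to deal with the partiality of $\Phi$.
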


\section{Random Bernoulli measures}\label{sec-3}

\subsection{Mixtures of Bernoulli measures}  The mixture of Bernoulli measures was first studied by De Finetti \cite{DeF31}.
Given a measure $P$ on $\mathscr{P}(\cs)$, the \emph{barycenter} of
$P$ is the measure $\xi_P$ on $\cs$ defined by
\begin{displaymath}
  \xi_P(\mathcal{U}) = \int \mu(\mathcal{U})\, dP(\mu)
\end{displaymath}
for all Borel $\U\subseteq\cs$.  In the case that $P$ is concentrated on a set of Bernoulli measures in $\mathscr{P}(\cs)$, we say that the barycenter $\xi_P$ is a \emph{mixture of Bernoulli measures}.

A measure $\xi$ is \emph{exchangeable} if the $\xi$-probability of a
string $\sigma$ being an initial segment of $x\in\cs$ is the same as the
$\xi$-probability of $\sigma$ occurring as a subword at any fixed block of bits of length $|\sigma|$ in $x$. A
classical result in probability theory is De Finetti's theorem, which
says that $\xi$ is exchangeable if and only if it is the mixture of Bernoulli
measures. 
Freer and Roy \cite{FreRoy12} proved that in this setting $\xi$
is computable if and only if $P$ is computable. Hoyrup then
generalized this result via the following theorem.

\begin{theorem}[\cite{Hoy13}]\label{thm-barycenter}
  If $P$ is a computable measure on $\mathscr{P}(\cs)$, then its
  barycenter measure $\xi_P$ is computable and
  \begin{displaymath}
    \MLR_{\xi_P}=\bigcup_{\mu\in\MLR_P}\MLR_\mu.
  \end{displaymath}
\end{theorem}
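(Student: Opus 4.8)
The plan is to deduce both the computability of $\xi_P$ and the equality of randomness classes from the two transfer principles of Theorem~\ref{thm-pres}, applied to the coordinate projections of a computable \emph{coupling} measure on the product space $\mathscr{P}(\cs)\times\cs$, together with a van~Lambalgen-style lemma identifying its random points. For the computability of $\xi_P$: for each $\sigma\in\str$ the map $\nu\mapsto\nu(\llb\sigma\rrb)$ on $\mathscr{P}(\cs)$ is computable, uniformly in $\sigma$, since $\llb\sigma\rrb$ is clopen and hence its $\nu$-measure is simultaneously lower and upper semicomputable from a name of $\nu$; as integration of a computable bounded function against a computable measure produces a computable real, $\xi_P(\sigma)=\int\nu(\llb\sigma\rrb)\,dP(\nu)$ is computable uniformly in $\sigma$, and $\xi_P(\emptystr)=1$ and $\xi_P(\sigma)=\xi_P(\sigma 0)+\xi_P(\sigma 1)$ hold because every $\nu$ is a measure.

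I would next define a measure $Q$ on $\mathscr{P}(\cs)\times\cs$ by $Q(f)=\int\bigl(\int f(\nu,y)\,d\nu(y)\bigr)\,dP(\nu)$ for continuous $f$, equivalently $Q(B\times\llb\tau\rrb)=\int_B\nu(\tau)\,dP(\nu)$ for a basic open $B\subseteq\mathscr{P}(\cs)$ and $\tau\in\str$. Two applications of the computability of integration show $Q$ is a computable measure. Its pushforward under the projection $\pi_1\colon(\nu,y)\mapsto\nu$ is $P$ (as $Q(B\times\cs)=P(B)$), and its pushforward under $\pi_2\colon(\nu,y)\mapsto y$ is $\xi_P$ (as $Q(\mathscr{P}(\cs)\times\llb\tau\rrb)=\int\nu(\tau)\,dP(\nu)=\xi_P(\tau)$).

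The key lemma to establish is that $(\mu,x)\in\MLR_Q$ if and only if $\mu\in\MLR_P$ and $x\in\MLR_\mu$ (here $\MLR_\mu$, for a possibly noncomputable $\mu$, means Martin-L\"of $\mu$-randomness relative to a name for $\mu$). One direction: given $(\mu,x)\in\MLR_Q$, Preservation of Randomness for $\pi_1$ gives $\mu\in\MLR_P$, and if $x\notin\MLR_\mu$ then, letting $(\mathcal V^\nu_i)_i$ be the universal $\nu$-test relative to $\nu$, enumerated uniformly from a name of $\nu$, the sets $\mathcal W_i=\{(\nu,y):y\in\mathcal V^\nu_i\}$ form a $Q$-test (since $Q(\mathcal W_i)=\int\nu(\mathcal V^\nu_i)\,dP(\nu)\le 2^{-i}$) that captures $(\mu,x)$, a contradiction. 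For the converse, suppose $\mu\in\MLR_P$, $x\in\MLR_\mu$, and $(\mu,x)\in\bigcap_i\mathcal W_i$ for a nested $Q$-test $(\mathcal W_i)_i$; writing $\mathcal W^\nu_i=\{y:(\nu,y)\in\mathcal W_i\}$, the map $\nu\mapsto\nu(\mathcal W^\nu_i)$ is lower semicomputable uniformly in $i$, and $\int\nu(\mathcal W^\nu_i)\,dP(\nu)=Q(\mathcal W_i)\le 2^{-i}$. By Markov's inequality and countable subadditivity, $U_k=\{\nu:\exists\,j\ge k\;\;\nu(\mathcal W^\nu_{2j})>2^{-j}\}$ defines a uniformly $\Sigma^0_1$ sequence with $P(U_k)\le 2^{-k}$, i.e.\ a $P$-test; since $\mu\in\MLR_P$ there is some $k_0$ with $\mu(\mathcal W^\mu_{2j})\le 2^{-j}$ for all $j\ge k_0$, and then $(\mathcal W^\mu_{2(i+k_0)})_i$ is a $\mu$-test relative to $\mu$ that contains $x$, contradicting $x\in\MLR_\mu$.

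Combining the lemma with Theorem~\ref{thm-pres} (which holds verbatim for computable probability measures on computable metric spaces, the setting here) would then finish the proof: if $\mu\in\MLR_P$ and $x\in\MLR_\mu$ then $(\mu,x)\in\MLR_Q$, so $x=\pi_2(\mu,x)\in\MLR_{\xi_P}$ by Preservation of Randomness; conversely, if $x\in\MLR_{\xi_P}$ then No Randomness Ex Nihilo for $\pi_2$ yields some $\mu$ with $(\mu,x)\in\MLR_Q$, whence $\mu\in\MLR_P$ and $x\in\MLR_\mu$, giving $x\in\bigcup_{\mu\in\MLR_P}\MLR_\mu$. I expect the converse direction of the key lemma to be the main obstacle: one must check that the noncomputable threshold index $k_0$ still produces a genuine Martin-L\"of test relative to $\mu$, and that passing between the product test $(\mathcal W_i)_i$ and its sections $(\mathcal W^\nu_i)_i$ preserves both effectivity (the $\Sigma^0_1$-ness of $U_k$, uniformly in $k$) and the $\sigma$-additive measure estimates.
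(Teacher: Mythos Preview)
The paper does not prove this theorem; it is quoted from \cite{Hoy13} and stated without proof, so there is no argument in the paper to compare against. Your coupling-measure approach---building $Q$ on $\mathscr{P}(\cs)\times\cs$, proving the van~Lambalgen-type characterization of $\MLR_Q$, and then pushing forward along the two projections---is essentially Hoyrup's own argument in the cited paper, and it is correct.

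Two remarks on the points you flag as potential obstacles. First, the noncomputable threshold $k_0$ is not a problem: the sequence $(\mathcal W^\mu_j)_{j\in\omega}$ is uniformly $\Sigma^{0,\mu}_1$, so for any \emph{fixed} natural number $k_0$, however obtained, the shifted sequence $(\mathcal W^\mu_{2(i+k_0)})_{i\in\omega}$ is still a $\mu$-Martin-L\"of test relative to $\mu$; you only need $k_0$ to \emph{name} the test, not to enumerate it, and $x\in\MLR_\mu$ must pass every such test. (Your bound $P(U_k)\le 2^{-k}$ should read $2^{-k+1}$, since the Markov/union estimate gives $\sum_{j\ge k}2^{-j}$, but this is cosmetic.) Second, the effectivity of $U_k$ is fine for the reason you give: sections of an effectively open subset of the product are effectively open relative to the first coordinate, uniformly, so $\nu\mapsto\nu(\mathcal W^\nu_i)$ is lower semicomputable uniformly in $i$ and a name of $\nu$.

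The one genuine point you pass over too quickly is the appeal to Theorem~\ref{thm-pres} on the space $\mathscr{P}(\cs)\times\cs$: the paper states Preservation of Randomness and No Randomness Ex Nihilo only for Turing functionals on $\cs$. Both do extend to computable maps between computable metric spaces, but No Randomness Ex Nihilo in particular uses compactness of the domain (to get a random preimage rather than just a sequence of approximate preimages). Here $\mathscr{P}(\cs)$ is effectively compact, hence so is the product, so the extension goes through---but this deserves an explicit sentence rather than the parenthetical ``holds verbatim.''
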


This result provides a useful tool for studying random Bernoulli measures.  Given a computable measure $\nu$ on $[0,1]$, $\nu$ induces a measure $P_\nu$ on $\mathscr{P}(\cs)$ that satisfies 
\[
\int\mu(\U)dP_\nu(\mu)=\int\mu_p(\U)d\nu(p)
\]
for all Borel $\U\subseteq\cs$.  If $\xi$ is the resulting barycenter of $P_\nu$, it follows from Theorem \ref{thm-barycenter} that $\xi$ is the mixture of $\nu$-random Bernoulli measures:

\begin{corollary}\label{cor-barycenter}
Let $\nu$ be a computable measure on $[0,1]$.  Then there is a computable measure $\xi$ on $\cs$ such that
\[
    \MLR_{\xi}=\bigcup_{p\in\MLR_\nu}\MLR_{\mu_p}.
\]
\end{corollary}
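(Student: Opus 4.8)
The plan is to deduce this directly from Theorem \ref{thm-barycenter} by producing the right computable measure $P$ on $\mathscr{P}(\cs)$, namely $P = P_\nu$, and verifying that $\MLR_P$ is (identified with) $\MLR_\nu$ under the parametrization $p \mapsto \mu_p$. First I would make precise the map $\iota\colon [0,1] \to \mathscr{P}(\cs)$ sending $p$ to $\mu_p$. Since $\mu_p(\sigma) = p^{\#_0(\sigma)}(1-p)^{\#_1(\sigma)}$ is a polynomial in $p$ with rational coefficients, $\iota$ is a computable map between computable metric spaces, and moreover it is injective (distinct parameters give distinct one-bit probabilities) with computable inverse on its image (read off $\mu_p(\llb 0 \rrb) = p$). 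Thus $\iota$ is a computable embedding of $[0,1]$ onto the closed set of Bernoulli measures in $\mathscr{P}(\cs)$.

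Next I would define $P_\nu$ as the pushforward $\nu_\iota$ of $\nu$ under $\iota$; by the computability of $\iota$ this is a computable measure on $\mathscr{P}(\cs)$, and unwinding the definition of pushforward gives exactly the defining identity $\int \mu(\U)\,dP_\nu(\mu) = \int \mu_p(\U)\,d\nu(p)$ quoted in the text, so the barycenter $\xi$ of $P_\nu$ is well-defined and, by Theorem \ref{thm-barycenter}, computable. Applying Theorem \ref{thm-barycenter} then yields $\MLR_\xi = \bigcup_{\mu \in \MLR_{P_\nu}} \MLR_\mu$. It remains to show $\MLR_{P_\nu} = \iota(\MLR_\nu)$, i.e.\ that $\mu_p$ is Martin-L\"of random with respect to $P_\nu$ if and only if $p$ is Martin-L\"of random with respect to $\nu$. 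This is the one place requiring care, and it is a standard fact: Martin-L\"of randomness is preserved under computable measure-isomorphisms, and $\iota$ together with its inverse $\mu \mapsto \mu(\llb 0\rrb)$ furnishes such an isomorphism between $([0,1],\nu)$ and $(\iota([0,1]), P_\nu)$. Concretely, a $\nu$-test can be pulled back through $\iota^{-1}$ to a $P_\nu$-test and vice versa, with the measure bounds preserved because $P_\nu$ is the pushforward of $\nu$; one should note that $P_\nu$ is concentrated on $\iota([0,1])$, so no randoms lie outside the image.

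The main obstacle, such as it is, lies in this last equivalence: one must be slightly careful that randomness with respect to a measure living on a proper closed subspace (the Bernoulli measures inside all of $\mathscr{P}(\cs)$) behaves correctly, i.e.\ that being $P_\nu$-random as an element of $\mathscr{P}(\cs)$ is the same as being $\iota^{-1}(\cdot)$-image-of-a-$\nu$-random-point. This is handled by the general principle that if $f$ is a computable injection with computable inverse on the range and $\rho$ is a computable measure, then $x$ is $\rho$-random iff $f(x)$ is $f_*\rho$-random (and every $f_*\rho$-random point is in the range of $f$), applied with $f = \iota$ and $\rho = \nu$. Everything else is bookkeeping. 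Once $\MLR_{P_\nu} = \iota(\MLR_\nu)$ is in hand, substituting into the conclusion of Theorem \ref{thm-barycenter} and rewriting $\MLR_{\iota(p)} = \MLR_{\mu_p}$ gives $\MLR_\xi = \bigcup_{p \in \MLR_\nu} \MLR_{\mu_p}$, as desired.
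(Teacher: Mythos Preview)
Your proposal is correct and follows exactly the route the paper sketches: push $\nu$ forward along $p\mapsto\mu_p$ to obtain $P_\nu$, take the barycenter $\xi$, and invoke Theorem~\ref{thm-barycenter}. The paper treats the corollary as immediate and does not spell out the identification $\MLR_{P_\nu}=\iota(\MLR_\nu)$; your argument via the computable embedding $\iota$ and its computable inverse $\mu\mapsto\mu(\llb 0\rrb)$ is precisely the detail needed to justify that step.
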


This result in implicit in \cite{Hoy13}  and was shown independently by V'yugin in \cite{Vyu12}.  

\subsection{Characterizing proper Bernoulli parameters}
An immediate consequence of Corollary \ref{cor-barycenter} is the following:

\begin{corollary}\label{cor-prop}
  Let $p\in[0,1]$ be proper.  Then there is some computable
  $\xi\in\mathscr{P}(\cs)$ such that $\MLR_{\mu_p}\subseteq \MLR_\xi$.
\end{corollary}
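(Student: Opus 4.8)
The plan is to reduce this to Corollary \ref{cor-barycenter} by exhibiting a computable measure $\nu$ on $[0,1]$ for which $p \in \MLR_\nu$. Since $p$ is assumed proper, there is by definition a computable measure $\rho \in \mathscr{P}(\cs)$ with $p' \in \MLR_\rho$ for some $p' \in \cs$; here I identify the real $p \in [0,1]$ with its binary expansion, so that $p$ itself is (modulo the usual harmless coding issues about dyadic rationals) a member of $\cs$, and being proper as a real means being proper as a sequence. First I would push $\rho$ forward along the canonical binary-expansion map $e \colon \cs \to [0,1]$, $e(x) = \sum_i x(i)2^{-(i+1)}$, which is computable and defined $\rho$-almost everywhere once we delete the measure-zero set of eventually-constant sequences; this yields a computable measure $\nu \eqdef \rho_e$ on $[0,1]$. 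By Preservation of Randomness (Theorem \ref{thm-pres}(i)), $e(p') \in \MLR_\nu$, and choosing the representative $p'$ of $p$ appropriately (any $\rho$-random sequence with $e(p') = p$ works, and such a representative exists since $p$ is proper) gives $p \in \MLR_\nu$.

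Next I would feed this $\nu$ into Corollary \ref{cor-barycenter} to obtain a computable measure $\xi \in \mathscr{P}(\cs)$ with $\MLR_\xi = \bigcup_{q \in \MLR_\nu} \MLR_{\mu_q}$. Since $p \in \MLR_\nu$, the union on the right-hand side includes the term $\MLR_{\mu_p}$, and hence $\MLR_{\mu_p} \subseteq \MLR_\xi$, which is exactly the desired conclusion. The only remaining bookkeeping is to confirm that the measure $P_\nu$ on $\mathscr{P}(\cs)$ induced by $\nu$ — via the computable map $q \mapsto \mu_q$ from $[0,1]$ into $\mathscr{P}(\cs)$ — is itself computable, so that Theorem \ref{thm-barycenter} applies; but this is immediate because $q \mapsto \mu_q$ is a computable map of computable metric spaces and computability of measures is preserved under pushforward by such maps, which is precisely the mechanism already invoked in deriving Corollary \ref{cor-barycenter}.

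The main obstacle, such as it is, is purely the matter of getting the coding between the real $p \in [0,1]$ and a point of Cantor space to cooperate: the binary-expansion map $e$ is not injective, and ``proper'' for a real must be interpreted correctly (a real is proper iff some/any representing sequence is random with respect to a computable measure, and for this one should be slightly careful since the set where $e$ fails to be injective is exactly the dyadic rationals, which is both $\rho$-null for any reasonable $\rho$ and $\nu$-null, and in any case a proper real is never a dyadic rational). Once this identification is pinned down, both directions — from proper real to computable $\nu$ with $p \in \MLR_\nu$, and from $\MLR_\nu$ back to $\MLR_\xi$ — are direct applications of results already in hand, so the corollary follows with essentially no further work.
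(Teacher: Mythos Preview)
Your argument is correct and matches the paper's approach exactly: the paper simply declares Corollary~\ref{cor-prop} to be an immediate consequence of Corollary~\ref{cor-barycenter}, and you have spelled out the implicit step of producing a computable $\nu$ on $[0,1]$ with $p\in\MLR_\nu$ from the hypothesis that $p$ is proper, then reading off $\MLR_{\mu_p}\subseteq\MLR_\xi$ from the union. One small correction to your aside: the claim that ``a proper real is never a dyadic rational'' is false (any computable real is proper via its Dirac measure), but this does not affect the argument, since for computable $p$ one may simply take $\xi=\mu_p$ (equivalently $\nu=\delta_p$) and be done.
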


%
%
%
%

We thus have the consequence that every sequence that is Bernoulli $p$-random for some proper $p$ is itself proper.  This raises a natural question:  If $x$ is proper and Bernoulli $p$-random for some $p\in[0,1]$, must $p$ be proper?  We answer this question in the affirmative.  To do so, we will draw upon facts from the proof of the following result, discussed in the introduction, which is due to Kjos-Hanssen:

\begin{theorem}[\cite{Kjo10}]\label{thm-kj1}
For $p\in[0,1]$, if $x\in\MLR_{\mu_p}$, then $x\geq_T p$.
\end{theorem}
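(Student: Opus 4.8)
The plan is to extract from the hypothesis $x\in\MLR_{\mu_p}$ a quantitative effective law of large numbers for the empirical frequency of $0$'s, and then to observe that this already yields a Turing reduction of $p$ to $x$. Throughout, write $\hat p_n(y)=\#_0(y\uh n)/n$.

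\emph{Step 1 (a quantitative effective law of large numbers relative to $p$).} Hoeffding's inequality gives $\mu_r(\{y : |\hat p_n(y)-r|>\delta\})\le 2e^{-2n\delta^2}$ for every parameter $r\in[0,1]$, and crucially this bound is uniform in $r$. Consequently there is a computable, strictly increasing function $h$ — not depending on $p$ — with $\sum_{n\ge h(i)}2e^{-2n\cdot 4^{-i}}\le 2^{-i}$ for all $i$. I would then set
\[
\U_i=\bigl\{y\in\cs : \exists j\ge i\ \exists n\ge h(j)\ \bigl(|\hat p_n(y)-p|>2^{-j}\bigr)\bigr\}.
\]
Each $\U_i$ is $\Sigma^{0,p}_1$ uniformly in $i$ (the oracle $p$ is needed only to decide the rational inequality), the sequence $(\U_i)_i$ is nested decreasing, and $\mu_p(\U_i)\le\sum_{j\ge i}2^{-j}=2^{-i+1}$; hence $(\U_{i+1})_i$ is a $\mu_p$-Martin-L\"of test relative to $p$. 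Since $x\in\MLR_{\mu_p}$, it passes this test, so $x\notin\U_{i^\ast}$ for some $i^\ast$, which unwinds to the statement that $|\hat p_n(x)-p|\le 2^{-j}$ for all $j\ge i^\ast$ and all $n\ge h(j)$. Writing $n_0=h(i^\ast)$ and $\epsilon(n)=2^{-\max\{j:\,h(j)\le n\}}$, the function $\epsilon$ is a single computable function with $\epsilon(n)\to 0$ depending on neither $x$ nor $p$, and one obtains $|\hat p_n(x)-p|\le\epsilon(n)$ for every $n\ge n_0$.

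\emph{Step 2 (the reduction).} Since a Turing reduction need not be uniform, the threshold $n_0=n_0(x)$ may simply be supplied to the machine as a finite parameter. I would let $\Phi$ be the Turing functional that, on oracle $x$ and input $k$, computes the least $m\ge n_0$ with $\epsilon(m)\le 2^{-k}$ (which exists since $\epsilon\to 0$) and returns the rational $\hat p_m(x)$; by Step 1, $|\hat p_m(x)-p|\le\epsilon(m)\le 2^{-k}$. Thus $\Phi^x$ is a sequence of rationals converging to $p$ with a computable rate of convergence, so $p$ is computable from $x$, i.e.\ $x\ge_T p$.

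The delicate point lies entirely in Step 1: one must ensure that the rate $\epsilon$ is a \emph{single} computable function, independent of the (possibly noncomputable) parameter $p$ — this is exactly what uniformity of Hoeffding's inequality in $r$ buys — and one must take the $\U_i$ nested so that passing the test delivers the decaying bounds $2^{-j}$ for all sufficiently large $j$ rather than one fixed bound. Given that estimate, Step 2 is essentially automatic; the only observation needed there is that the $x$-dependence of $n_0$ is harmless, because $\ge_T$ is not required to be uniform.
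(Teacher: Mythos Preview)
Your proof is correct. The paper does not give its own proof of this theorem (it is cited from Kjos-Hanssen), but in the proof of Theorem~\ref{thm-prop} it records precisely what Kjos-Hanssen's argument yields and what is needed downstream: a \emph{blind} test $(\mathcal{V}_d)_{d\in\omega}$ that is simultaneously a $\mu_q$-Martin-L\"of test for \emph{every} $q\in[0,1]$, together with, for each level $d$, an unrelativized Turing functional $\Theta_d$ that computes the limiting frequency on every $y\notin\mathcal{V}_d$.

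Your argument follows the same underlying route---Hoeffding gives a rate uniform in the parameter, so the strong law of large numbers becomes effective---but your test $(\U_i)_i$ uses $p$ as an oracle through the condition $|\hat p_n(y)-p|>2^{-j}$. For the bare statement of the theorem this is harmless. The blind, parameter-free version, however, is exactly what the paper exploits in Theorems~\ref{thm-prop} and~\ref{thm-uniform}: one needs the $\Pi^0_1$ class $\mathcal{P}_d=\cs\setminus\mathcal{V}_d$ and the functional $\Theta_d$ to be unrelativized, so that pushing a computable $\nu$ forward along the associated total functional produces a \emph{computable} measure. Your proof is one line away from this stronger form: replace $|\hat p_n(y)-p|>2^{-j}$ by the Cauchy condition $\exists n,m\ge h(j)\ |\hat p_n(y)-\hat p_m(y)|>2^{-j+1}$. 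The resulting sets are $\Sigma^0_1$ with no oracle, and by the triangle inequality they are still contained in your $\U_i$ (up to a shift), hence have $\mu_q$-measure $\le 2^{-i+2}$ for every $q$. Passing that blind test delivers the same computable rate $\epsilon$ and the functionals $\Theta_d$ the paper uses.
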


\begin{theorem}\label{thm-prop}
If $x$ is proper and $x\in\MLR_{\mu_p}$ for some $p\in[0,1]$, then $p$ is proper.
\end{theorem}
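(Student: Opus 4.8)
The plan is to extract from Kjos-Hanssen's proof of Theorem~\ref{thm-kj1} a uniform Turing functional witnessing $x \geq_T p$, and then feed a suitable random sequence through it, invoking Preservation of Randomness (Theorem~\ref{thm-pres}(i)). Since $x$ is proper, fix a computable measure $\mu$ with $x \in \MLR_\mu$. The key observation I want to isolate from Kjos-Hanssen's argument is that the reduction computing $p$ from a Bernoulli $p$-random sequence is essentially the relative-frequency map: one recovers $p$ from $x$ by reading off $\lim_n \#_1(x\uh n)/n$, and, crucially, the rate of convergence can be controlled effectively and \emph{uniformly in $p$}, because the set of $x$ for which the frequencies fail to converge to $p$ at the expected rate is a $\mu_p$-test (a union of large-deviation events), and — this is the point — it can be taken to be a test that does not depend on $p$ as an oracle in an essential way, or at worst depends on it through a single Turing functional $\Phi$ defined on a set of $\mu_p$-measure one for every $p$ simultaneously.

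First I would make this precise: define $\Phi \colon \subseteq \cs \to \cs$ by letting $\Phi(x)$ be (a binary expansion of) $\lim_n \#_1(x\uh n)/n$ whenever this limit exists, so that $\dom(\Phi)$ has $\mu_p$-measure one for every $p\in[0,1]$ by the law of large numbers, and $\Phi$ is a single fixed functional. By Kjos-Hanssen's theorem, every $x \in \MLR_{\mu_p}$ lies in $\dom(\Phi)$ and $\Phi(x)$ codes $p$; in fact $x$ computes $p$, but for our purposes the weaker statement that $\Phi(x)$ determines $p$ suffices. Now consider the pushforward: for the computable Bernoulli-mixture measure $\xi = \xi_{P_\lambda}$ of Corollary~\ref{cor-barycenter} with $\nu = \lambda$ the Lebesgue measure on $[0,1]$, or indeed for any computable $\mu$ on $\cs$, the pushforward $\mu_\Phi$ is a computable measure on $\cs$ (identifying $[0,1]$ with $\cs$ appropriately). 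Then from $x \in \MLR_\mu$ and Preservation of Randomness we get $\Phi(x) \in \MLR_{\mu_\Phi}$; but $\Phi(x)$ codes $p$, so $p$ is random with respect to the computable measure $\mu_\Phi$, i.e., $p$ is proper.

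The main obstacle I anticipate is the first step — verifying that Kjos-Hanssen's reduction really is (or can be replaced by) a single Turing functional $\Phi$ independent of $p$, with domain of full $\mu_p$-measure for all $p$. The relative-frequency map is the natural candidate, and it is manifestly $p$-independent, but one must check two things: (a) that on $\MLR_{\mu_p}$ the limit not only exists but equals $p$ — this is where the effective large-deviation bounds from Kjos-Hanssen's proof enter, ensuring the "bad" $x$ form a genuine $\mu_p$-ML-test relative to $p$ — and (b) that the coding of the real $p$ as an element of $\cs$ is done uniformly and computably, so that $\mu_\Phi$ is genuinely computable (one should use, say, the map $x \mapsto $ the sequence of rational lower approximations $\#_1(x\uh n)/n$, or pass to a standard homeomorphism of a co-null subset of $\cs$ with $[0,1]$, being careful about dyadic ambiguities). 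Once $\Phi$ is pinned down, the rest is a direct application of the preservation theorem, and no further randomness-theoretic input is needed. A secondary point to check is that $\mu_\Phi(\ran \Phi$-coding-of-$p) $ behaves well enough that "random w.r.t.\ $\mu_\Phi$" is the same as "proper"; this is automatic since $\mu_\Phi$ is by construction a computable measure and properness is exactly membership in some $\MLR_\eta$ for $\eta$ computable.
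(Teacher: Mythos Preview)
Your overall strategy---extract a Turing functional from Kjos-Hanssen's argument that sends $x$ to $p$, then push the computable measure $\mu$ (the one witnessing that $x$ is proper) through it and invoke preservation of randomness---is exactly the paper's approach. However, there are two genuine gaps in your execution, both of which the paper handles and you do not.

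First, the map you propose, $\Phi(x)=\lim_n \#_1(x\uh n)/n$, is \emph{not} a Turing functional: mere existence of the limit gives no effective modulus of convergence, so no finite prefix of $x$ determines any bit of $\Phi(x)$. You flag this as the ``main obstacle'' but do not resolve it. What Kjos-Hanssen's proof actually yields is not a single functional but a family $(\Theta_d)_{d\in\omega}$ together with a blind test $(\mathcal{V}_d)_{d\in\omega}$ such that $\Theta_d$ computes the limiting frequency of any $y\notin\mathcal{V}_d$; for your particular $x\in\MLR_{\mu_p}$ one then fixes a $d$ with $x\notin\mathcal{V}_d$ and works with $\Theta_d$. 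There is no single $\Phi$ with domain of $\mu_p$-measure one for every $p$ that does the job.

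Second---and this is the gap you do not anticipate---once you have the correct partial functional $\Theta_d$, its domain (the $\Pi^0_1$ class $\mathcal{P}_d=\cs\setminus\mathcal{V}_d$) has large $\mu_p$-measure, but there is no reason it has $\mu$-measure one, where $\mu$ is the arbitrary computable measure making $x$ proper. Thus the hypothesis of Theorem~\ref{thm-pres}(i) is not met, and the pushforward $\mu_{\Theta_d}$ need not be a computable \emph{probability} measure. The paper fixes this by totalizing: define $\Gamma$ to run $\Theta_d(y)$ while $y$ appears to lie in $\mathcal{P}_d$, and switch to outputting $0$s forever once $y$ is seen to leave $\mathcal{P}_d$. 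Then $\Gamma$ is total, so $\mu(\dom(\Gamma))=1$ trivially, $\zeta=\mu\circ\Gamma^{-1}$ is a computable probability measure, and since $x\in\mathcal{P}_d$ we still have $\Gamma(x)=p$, whence $p\in\MLR_\zeta$ by preservation of randomness.
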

 
\begin{proof}

Fix $p$ such that $x\in\MLR_{\mu_p}$ and
  suppose that $x\in\MLR_\nu$ for some computable measure $\nu$.  By the proof of Theorem \ref{thm-kj1}
 there is a blind Martin-L\"of test
  $(\mathcal{V}_d)_{d\in\omega}$ such that (i) $(\mathcal{V}_d)_{d\in\omega}$ is a $\mu_q$-Martin-L\"of
    test for all $q\in[0,1]$, and (ii) for each $d\in\omega$, there is a Turing functional
    $\Theta_d$ such that $y\notin\mathcal{V}_d$ implies that
    \[
    \Theta_d(y)=\lim_{n\rightarrow\infty}\frac{\#\{i<n\colon
      y(i)=1\}}{n}.
    \]

  Since $x\in\MLR_{\mu_p}$, there is some $d\in\omega$ such that
  $x\notin\mathcal{V}_d$.  We define a total Turing functional
  $\Gamma$ in terms of the $\Pi^0_1$ class $\mathcal{P}_d:=\cs\setminus\mathcal{V}_d$ as follows:  For $y\in\cs$, to compute $\Gamma(y)$, we attempt to calculate
  $\Theta_d(y)$.  If $y\in\cs\setminus\mathcal{V}_d$, then
  $\Theta_d(y)$ will compute the relative frequency of 1s in $y$.
  However, if $y\notin\mathcal{P}_d$, there will be some stage $s$ such that $y\notin\mathcal{P}_{d,s}$ where
$(\mathcal{P}_{d,s})_{s\in\omega}$ is an effective sequence of clopen subsets of $\cs$ such that  $\bigcap_{s\in\omega}\mathcal{P}_{d,s}=\mathcal{P}_d$.
In this case, we will terminate the calculation of $\Theta_d(y)$ and $\Gamma(y)$ will switch to outputting 0s thereafter (following any initial bits that might have
  been calculated before the first stage $s$ at which we see
  $y\notin\mathcal{P}_{d,s}$).  Clearly, $\Gamma$ is total.
  
    Next, we set $\zeta=\nu\circ\Gamma^{-1}$, which
  is a computable measure because $\nu$ is computable and $\Gamma$ is
  a total Turing functional.  To see that $p$ is proper, note that since
  $x\in\mathcal{P}_d\cap\MLR_\nu$, $\Gamma(x)=p$, and $\Phi$ is defined on a set of $\nu$-measure one (as $\Phi$ is total), it
  follows by the preservation of randomness (Theorem \ref{thm-pres}(i)) that
  $\Gamma(x)=p\in\MLR_\zeta$.
\end{proof}

Combining Corollary \ref{cor-prop} and Theorem \ref{thm-prop}, we have shown:

\begin{corollary}
  For $x\in\MLR_{\mu_p}$ for some $p\in[0,1]$, $x$ is proper if and
  only if $p$ is proper. \label{thm:xpropiffpprop}
\end{corollary}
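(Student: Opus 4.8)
The statement is the conjunction of two implications, both of which are already available, so the plan is simply to record how they fit together. Fix $p\in[0,1]$ and $x\in\MLR_{\mu_p}$.

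For the direction ``$p$ proper $\Rightarrow$ $x$ proper'', I would invoke Corollary \ref{cor-prop}: properness of $p$ supplies a computable $\xi\in\mathscr{P}(\cs)$ with $\MLR_{\mu_p}\subseteq\MLR_\xi$. Since $x\in\MLR_{\mu_p}$ we obtain $x\in\MLR_\xi$, and as $\xi$ is computable this is exactly the assertion that $x$ is proper. (One could equivalently go through Corollary \ref{cor-barycenter} directly: pick a computable measure $\nu$ on $[0,1]$ witnessing $p\in\MLR_\nu$, let $\xi$ be the associated barycenter, and note that $\MLR_{\mu_p}\subseteq\bigcup_{q\in\MLR_\nu}\MLR_{\mu_q}=\MLR_\xi$.)

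For the converse direction ``$x$ proper $\Rightarrow$ $p$ proper'', there is nothing left to do: this is precisely Theorem \ref{thm-prop}, so I would simply cite it. It is worth flagging that this is where all the content lies — unlike the first direction, it cannot be read off from the de Finetti / barycenter machinery, but instead rests on extracting from Kjos-Hanssen's proof of Theorem \ref{thm-kj1} a single blind Martin-L\"of test each of whose complement components carries a Turing functional computing the limiting frequency of $1$s, promoting one such component to a total functional $\Gamma$ with $\Gamma(x)=p$, and pushing a computable measure witnessing the properness of $x$ forward along $\Gamma$ while appealing to preservation of randomness (Theorem \ref{thm-pres}(i)).

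Consequently, the only genuine ``obstacle'' here is bookkeeping: verifying that the notion of ``proper'' used in Corollary \ref{cor-prop} and in Theorem \ref{thm-prop} — namely, Martin-L\"of randomness with respect to some computable measure on $\cs$ — is literally the same notion on both sides of the biconditional, which it is by definition. Hence the corollary follows immediately by combining the two cited results.
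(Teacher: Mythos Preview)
Your proposal is correct and matches the paper's approach exactly: the paper states this corollary immediately after Theorem~\ref{thm-prop} with the single sentence ``Combining Corollary~\ref{cor-prop} and Theorem~\ref{thm-prop}, we have shown,'' which is precisely the two-citation argument you give. Your additional paragraph unpacking the content of Theorem~\ref{thm-prop} is accurate but goes beyond what the paper records for this corollary.
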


The above result shows that any value $p$ for which there is a proper $p$-random sequence must itself be proper.  Can this be shown to hold uniformly?  That is, given a computable measure $\nu$ on $\cs$, can we find a \emph{single} computable measure $\zeta$ such that 
\[
(\forall p\in[0,1])\bigl[\MLR_\nu\cap\MLR_{\mu_p}\neq\emptyset\;\Rightarrow\; p\in\MLR_\zeta\bigr]?
\] We answer this question in the affirmative.

\begin{theorem}\label{thm-uniform}
  If $\nu$ is a computable measure on $\cs$, then there is a computable measure $\zeta$ on $[0,1]$ such that for
  every $p\in[0,1]$ such that 
  $\MLR_\nu\cap\MLR_{\mu_p}\neq\emptyset$, we have
  $p\in\MLR_\zeta$.\label{thm:compatibility}
\end{theorem}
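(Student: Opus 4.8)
The idea is to make the argument in the proof of Theorem~\ref{thm-prop} uniform in $p$ by packaging the countably many Turing functionals $\Gamma$ (one for each level $d$ of the blind test $(\mathcal{V}_d)_{d\in\omega}$) into a single computable object and taking a suitable mixture of pushforward measures. Recall from the proof of Theorem~\ref{thm-kj1} that there is a \emph{fixed} blind Martin-L\"of test $(\mathcal{V}_d)_{d\in\omega}$ — independent of $p$ — that is simultaneously a $\mu_q$-test for every $q\in[0,1]$, together with Turing functionals $\Theta_d$ such that $y\notin\mathcal{V}_d$ implies $\Theta_d(y)$ equals the limiting relative frequency of $1$s in $y$. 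As in the proof of Theorem~\ref{thm-prop}, for each $d$ let $\mathcal{P}_d=\cs\setminus\mathcal{V}_d$, fix an effective approximation $(\mathcal{P}_{d,s})_{s\in\omega}$ by clopen sets with $\bigcap_s\mathcal{P}_{d,s}=\mathcal{P}_d$, and define the \emph{total} Turing functional $\Gamma_d$ that runs $\Theta_d(y)$ but switches to outputting $0$s forever once it first sees $y\notin\mathcal{P}_{d,s}$. All of this is uniform in $d$.

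\textbf{Key steps.} First I would form, for each $d$, the computable measure $\zeta_d := \nu\circ\Gamma_d^{-1}$ on $\cs$; these are uniformly computable since $\nu$ is computable and $(\Gamma_d)_{d\in\omega}$ is a uniformly computable family of total Turing functionals. Second, I would push these down to measures on $[0,1]$: using the standard computable identification of $[0,1]$ with (a quotient of) $\cs$ via binary expansions, each $\zeta_d$ yields a computable measure $\hat\zeta_d$ on $[0,1]$, uniformly in $d$; the point of the ``switch to $0$s'' construction is that when $y\in\mathcal{P}_d\cap\MLR_\nu$ the output $\Gamma_d(y)$ is a genuine real in $[0,1]$ (the frequency of $1$s), and by preservation of randomness (Theorem~\ref{thm-pres}(i)) it lies in $\MLR_{\hat\zeta_d}$. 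Third, I would set $\zeta := \sum_{d\in\omega} 2^{-(d+1)}\,\hat\zeta_d$, which is a computable measure on $[0,1]$, and observe that $\MLR_{\hat\zeta_d}\subseteq\MLR_\zeta$ for every $d$ (a computable mixture never destroys randomness of its computable components — each $\hat\zeta_d$ is absolutely continuous with respect to $\zeta$ with a computable density bounded by $2^{d+1}$, so any $\hat\zeta_d$-test scales to a $\zeta$-test). Finally, given $p$ with $\MLR_\nu\cap\MLR_{\mu_p}\neq\emptyset$, pick $x$ in that intersection; since $x\in\MLR_{\mu_p}$ and $(\mathcal{V}_d)_{d\in\omega}$ is a $\mu_p$-test, there is some $d$ with $x\notin\mathcal{V}_d$, i.e. $x\in\mathcal{P}_d$; then $x\in\mathcal{P}_d\cap\MLR_\nu$ gives $\Gamma_d(x)=p$ and hence $p\in\MLR_{\hat\zeta_d}\subseteq\MLR_\zeta$, as desired.

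\textbf{Main obstacle.} The essential content of Theorem~\ref{thm-prop} carries over verbatim for each fixed $d$; the only genuinely new work is checking that the assembly into one measure is legitimate and that nothing in the construction of the $\Gamma_d$'s and $\zeta_d$'s depends on $p$. The one subtlety I would be careful about is the passage between $\cs$ and $[0,1]$: a real in $[0,1]$ has two binary expansions exactly when it is a dyadic rational, but such reals are computable, hence not Martin-L\"of random, so they are irrelevant to the conclusion ``$p\in\MLR_\zeta$'' and one may work with the standard computable measure on $[0,1]$ induced by $\hat\zeta_d$ without ambiguity; alternatively one can phrase everything with $\nu$ a computable measure on $[0,1]$ directly and view $\Gamma_d$ as producing a real via its output sequence of bits. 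This bookkeeping — ensuring $\hat\zeta_d$ is well-defined and computable and that preservation of randomness applies on the interval rather than on Cantor space — is the main thing to get right; the rest is a routine adaptation of the proof of Theorem~\ref{thm-prop}.
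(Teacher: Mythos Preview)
Your proposal is correct and follows essentially the same route as the paper: define the pushforward measures $\zeta_d=\nu\circ\Gamma_d^{-1}$ uniformly in $d$, form the computable convex combination $\zeta=\sum_d 2^{-(d+1)}\zeta_d$, and use the bound $\zeta_d\le 2^{d+1}\zeta$ to obtain $\MLR_{\zeta_d}\subseteq\MLR_\zeta$. One small slip in your parenthetical: you wrote that ``any $\hat\zeta_d$-test scales to a $\zeta$-test,'' but the direction you actually need (and which the density bound gives) is that any $\zeta$-test becomes, after shifting indices by $d+1$, a $\hat\zeta_d$-test --- this is precisely how the paper argues the inclusion.
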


\begin{proof}
As in the proof of Theorem~\ref{thm:xpropiffpprop}, for every $p\in[0,1]$ and every $x\in\MLR_\nu\cap\MLR_{\mu_p}$, there is some $d\in\omega$ such that $x\in \mathcal{P}_d$ and thus a corresponding measure $\zeta_d$ (defined in terms of the functional $\Theta_d$) such that $p\in\MLR_{\zeta_d}$.  Moreover, the collection of measures $(\zeta_j)_{j\in\omega}$ is uniformly computable.  Thus we can define the convex combination $\zeta \eqdef \sum 2^{-(i+1)}\zeta_{i}$, which is itself a computable measure.  We claim that $\MLR_{\zeta_d}\subseteq\MLR_\zeta$.  Given $q\notin\MLR_\zeta$, there is some $\zeta$-Martin-L\"of test $(\U_i)_{i\in\omega}$ such that $q\in\bigcap_{i\in\omega}\U_i$.  Then since for each $i\in\omega$ we have
\[
2^{-(d+1)}\zeta_d(\U_i)\leq\sum_{i\in\omega}2^{-(j+1)}\zeta_j(\U_i)=\zeta(\U_i)\leq 2^{-i},
\]
 it follows that $(\U_i)_{i\geq d+1}$ is a $\zeta_d$-Martin-L\"of test containing $q$.  $\zeta$ is thus the desired measure.
\end{proof}

Let us say that a sequence is \emph{continuously proper} if it is random with respect to a continuous, computable measure, where a measure $\mu$ is continuous if $\mu(\{x\})=0$ for every $x\in\cs$. We have seen that a Bernoulli $p$-random sequence is proper if and only if $p$ is sequence.  Does a similar result hold for continuously proper $p$-random sequences?  One direction is straightforward:

\begin{proposition}\label{prop-contprop}
Let $p\in[0,1]$.  If $x\in\MLR_{\mu_p}$ is proper, then $x$ is continuously proper.
\end{proposition}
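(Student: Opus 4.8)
The plan is to manufacture a continuous computable measure with respect to which $x$ is Martin-L\"of random. First I would invoke Corollary~\ref{thm:xpropiffpprop}: since $x\in\MLR_{\mu_p}$ is proper, $p$ is proper, so there is a computable measure $\nu$ on $[0,1]$ with $p\in\MLR_\nu$. (The boundary values $p\in\{0,1\}$ are degenerate --- there $\mu_p$ is a computable point mass and $\MLR_{\mu_p}$ reduces to a single computable sequence, a case to be handled or excluded separately --- so I would assume $p\in(0,1)$.) Applying Corollary~\ref{cor-barycenter} to $\nu$ produces a computable measure $\xi$ on $\cs$ with $\MLR_\xi=\bigcup_{q\in\MLR_\nu}\MLR_{\mu_q}$; as $p\in\MLR_\nu$, this yields $\MLR_{\mu_p}\subseteq\MLR_\xi$, hence $x\in\MLR_\xi$. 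On its own this merely reproves that $x$ is proper, but $\xi$ is nearly continuous, and the remaining work would be to repair it.

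The key observation would be that the only possible atoms of $\xi$ are $0^\omega$ and $1^\omega$: for any $y\in\cs$ one has $\xi(\{y\})=\int_{[0,1]}\mu_q(\{y\})\,d\nu(q)$, and $\mu_q(\{y\})=0$ for every $q\in(0,1)$ (since $\mu_q(\llb y\uh n\rrb)\le\max(q,1-q)^n\to 0$), while for $q\in\{0,1\}$ the measure $\mu_q$ is a point mass on a constant sequence; so $\xi(\{y\})=0$ unless $y\in\{0^\omega,1^\omega\}$. Since $x\in\MLR_{\mu_p}$ with $p\in(0,1)$, $x$ is neither $0^\omega$ nor $1^\omega$ (a computable sequence is not $\mu_p$-random for such $p$, as its singleton is covered by cylinders of vanishing $\mu_p$-measure computably from $p$), so some initial segment $\sigma\eqdef x\uh n_0$ contains both a $0$ and a $1$, and then $0^\omega,1^\omega\notin\llb\sigma\rrb$. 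Because $x\in\MLR_\xi$ and $\sigma\prec x$, the real $c\eqdef\xi(\llb\sigma\rrb)$ is positive and computable, so the conditional measure $\xi'$ given by $\xi'(\U)=\xi(\U\cap\llb\sigma\rrb)/c$ is again a computable probability measure on $\cs$; and $\xi'$ is continuous, since any atom of $\xi'$ would be an atom of $\xi$ lying in $\llb\sigma\rrb$, and $\xi$ has none there. Finally one checks $x\in\MLR_{\xi'}$: given any $\xi'$-Martin-L\"of test $(\U_i)_{i\in\omega}$, the sequence $(\U_i\cap\llb\sigma\rrb)_{i\in\omega}$ is a $\xi$-Martin-L\"of test (as $\xi(\U_i\cap\llb\sigma\rrb)=c\cdot\xi'(\U_i)\le 2^{-i}$) which captures $x$ whenever the original does, so $x\in\MLR_\xi$ forces $x\notin\bigcap_{i\in\omega}\U_i$; hence no $\xi'$-test captures $x$ and $x\in\MLR_{\xi'}$. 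Thus $x$ is random with respect to the continuous computable measure $\xi'$, i.e., $x$ is continuously proper.

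The main obstacle I anticipate is precisely that the barycenter machinery does not hand back a continuous measure on the nose, and one cannot simply subtract the two atoms of $\xi$ and remain computable, since the masses $\xi(\{0^\omega\})$ and $\xi(\{1^\omega\})$ are only right-c.e.\ in general; conditioning on a cylinder through which $x$ passes but which avoids both constant sequences is the trick that gets around this cleanly. A secondary point requiring care is the degenerate parameters $p\in\{0,1\}$, where $\MLR_{\mu_p}$ is a single computable sequence.
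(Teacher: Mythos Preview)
Your approach is essentially the paper's: invoke Corollary~\ref{thm:xpropiffpprop} to get $p$ proper with respect to some computable $\nu$, form the barycenter $\xi$ via Corollary~\ref{cor-barycenter}, and argue continuity of $\xi$. The paper simply asserts $\xi(\{z\})=\int\mu_q(\{z\})\,d\nu(q)=0$ for every $z$ via dominated convergence, tacitly assuming $\nu(\{0,1\})=0$; your conditioning on a cylinder avoiding $0^\omega,1^\omega$ is a clean way to handle the edge case the paper glosses over.
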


\begin{proof}
Since $x$ is $p$-random and proper, by Theorem \ref{thm-prop}, $p$ is random with respect to some computable measure $\eta$ on $[0,1]$.  If $\xi$ is the mixture of all Bernoulli measures with parameters that are random with respect to $\eta$, then we have, for every $z\in\cs$,
\[
\xi(\{z\})=\lim_{n\rightarrow\infty}\xi(z\uh n)=\lim_{n\rightarrow\infty}\int\mu_p(z\uh n)d\eta(p)= \int\lim_{n\rightarrow\infty}\mu_p(z\uh n)d\eta(p)=0
\]
(where the second equality follows from the dominated convergence theorem). Thus, $\xi$ is continuous and $x$ is continuously proper.
\end{proof}

We now show that the converse of Proposition \ref{prop-contprop} does not hold.

\begin{theorem}
  There is some proper $r\in[0,1]$ and $x\in\MLR_{\mu_r}$ such that
  $x$ is continuously proper but $r$ is not continuously proper.
\end{theorem}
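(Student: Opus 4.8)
The idea is that the simplest possible example already works: take $r$ to be a \emph{computable} real with $0<r<1$ (say $r=1/3$, so that its binary expansion is unambiguous). Such an $r$ is proper, yet no computable point is random with respect to any continuous computable measure; on the other hand $\mu_r$ is itself continuous and computable, so every $x\in\MLR_{\mu_r}$ is automatically continuously proper.

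Concretely, I would proceed in three steps. First, $r$ is proper: the Dirac measure $\delta_r$ concentrated at $r$ is computable (as $r$ is), and $r\in\MLR_{\delta_r}$, since any $\delta_r$-Martin-L\"of test $(\U_i)_{i\geq 1}$ satisfies $\delta_r(\U_i)\leq 2^{-i}<1=\delta_r(\{r\})$ and hence $r\notin\U_i$. Second, $\mu_r$ is a continuous computable measure on $\cs$: it is computable because $r$ is, and for every $z\in\cs$ we have $\mu_r(\{z\})=\lim_{n\to\infty}r^{\#_0(z\uh n)}(1-r)^{\#_1(z\uh n)}\leq\lim_{n\to\infty}\bigl(\max\{r,1-r\}\bigr)^n=0$, using $0<r<1$. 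Hence any $x\in\MLR_{\mu_r}$ is continuously proper, witnessed by $\mu_r$ itself, and such $x$ exist since $\mu_r$ is a computable measure.

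Third, and this is the crux, a computable point is never continuously proper. Given any continuous computable measure $\nu$, I build a $\nu$-Martin-L\"of test capturing $r$: since $r$ is computable, the strings $r\uh n$ are computable uniformly in $n$; since $\nu$ is continuous, $\nu(\llb r\uh n\rrb)\to\nu(\{r\})=0$; and since $\nu$ is computable, $\nu(\llb r\uh n\rrb)$ is a computable real uniformly in $n$, so the condition ``$\nu(\llb r\uh n\rrb)<2^{-i}$'' is $\Sigma^0_1$ in $n$ and holds for all large $n$, so that we may compute, uniformly in $i$, an index $n_i$ with $\nu(\llb r\uh{n_i}\rrb)<2^{-i}$. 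Then $(\llb r\uh{n_i}\rrb)_{i\in\omega}$ is a $\nu$-Martin-L\"of test with $r\in\bigcap_i\llb r\uh{n_i}\rrb$, so $r\notin\MLR_\nu$; as $\nu$ was arbitrary, $r$ is not continuously proper. Together with the first two steps, this proves the theorem.

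I do not anticipate a genuine obstacle. The only delicate points are routine: the effectivity of the search for $n_i$ in the third step (which I justified above from the uniform computability of $\nu(\llb r\uh n\rrb)$ together with $\nu(\llb r\uh n\rrb)\to 0$), and the mild bookkeeping needed to pass between measures on $[0,1]$ and on $\cs$ under the binary-expansion identification, which is harmless once $r$ is taken non-dyadic.
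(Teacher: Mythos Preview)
Your proof is correct and considerably more elementary than the paper's. Under the paper's stated definition of \emph{proper} (Martin-L\"of random with respect to some computable measure), any computable real is proper via its Dirac measure, so your choice $r=1/3$ works: $\mu_r$ is itself a continuous computable measure witnessing that every $x\in\MLR_{\mu_r}$ is continuously proper, while $r$, being computable, cannot be random for any continuous computable measure. All three steps are sound; the only mildly delicate point, the effective search for $n_i$ in Step~3, you handle correctly (alternatively, one can simply note that $\{r\}$ is a $\nu$-null $\Pi^0_1$ class, hence contains no $\nu$-random).

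The paper instead produces a \emph{non-computable} $r$: it invokes a construction from \cite{Por15} of a computable measure $\nu$ on $[0,1]$ with countable support whose randoms are exactly its (computable) atoms together with a single non-computable limit point $r$; this $r$ is then shown not to be continuously proper by arguing that any $\Pi^0_1$ class of $\zeta$-randoms (for continuous computable $\zeta$) avoids all computable points, hence is $\nu$-null, hence cannot contain $r$. Continuous properness of $x\in\MLR_{\mu_r}$ is obtained via Proposition~\ref{prop-contprop}. Your argument sidesteps all of this machinery by exploiting the trivial case where $r$ is an atom of its witnessing measure; what the paper's longer route buys is a witness $r$ that is non-computable, arguably the more interesting situation and perhaps the one the author had in mind, though the theorem as stated does not demand it.
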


\begin{proof}  By \cite[Theorem 3.2]{Por15}, there is a computable measure $\nu$ on $[0,1]$ with the following properties:
\begin{itemize}
\item[(i)] $\nu$ has countable support, i.e., there is a countable collection $\mathcal{C}$ of sequences such that $\nu(\mathcal{C})=1$;
\item[(ii)] every $y\in\mathcal{C}$ is an atom of $\nu$, i.e. $\nu(\{y\})>0$, and hence is computable  (as shown by Kautz \cite{Kau91} every atom of a computable measure is computable); and
\item[(iii)] there is one non-computable sequence $r$ such that $\MLR_\nu=\mathcal{C}\cup\{r\}$.  
\end{itemize}
 Given $x\in \MLR_{\mu_{r}}$, by Proposition \ref{prop-contprop}, $x$ is continuously proper. However, $r$ is not continuously proper:  Suppose otherwise.  Then there would be a computable, continuous measure $\zeta$ on $[0,1]$ such that  $r \in \MLR_{\zeta}$.  Then if $(\U_i)_{i\in\omega}$ is a universal $\zeta$-Martin-L\"of test, then $r\in\cs\setminus\U_i$ for some $i\in\omega$.  Since $\cs\setminus\U_i$ is a $\Pi^0_1$ class consisting of sequences that are random with respect to $\zeta$, and since $\zeta$ is continuous, there is no computable sequence in $\cs\setminus\U_i$.  Thus, $\nu(\cs\setminus\U_i)=0$, that $r\notin\MLR_{\nu}$ (since no $\nu$-random sequence can be contained in a $\nu$-null $\Pi^0_1$ class), a contradiction.
  \end{proof}

%
%


We conclude this section with the observation that Theorem \ref{thm-uniform} gives us a method for showing that certain mixtures of Bernoulli measures cannot be obtained effectively.  For example:

\begin{theorem}
  There is no computable mixture of Bernoulli measures that includes
  all computable Bernoulli measures.
\end{theorem}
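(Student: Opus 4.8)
The plan is to argue by contradiction using Theorem~\ref{thm-uniform}. Suppose there were a computable mixture of Bernoulli measures $\xi$ whose associated set of parameters includes every computable point of $[0,1]$. Concretely, write $\xi = \xi_{P_\nu}$ for a computable measure $\nu$ on $[0,1]$ as in Corollary~\ref{cor-barycenter}, so that $\MLR_\xi = \bigcup_{p\in\MLR_\nu}\MLR_{\mu_p}$; the hypothesis that $\xi$ ``includes all computable Bernoulli measures'' should be read as: for every computable $p\in[0,1]$, $\MLR_{\mu_p}\subseteq\MLR_\xi$, equivalently every computable $p\in[0,1]$ lies in $\MLR_\nu$. (I would spell out this reading of the statement explicitly at the start of the proof, since ``includes'' is informal.) The first step is then to observe that for each computable $p$ there is at least one $x\in\MLR_{\mu_p}$, and such an $x$ witnesses $\MLR_\nu\cap\MLR_{\mu_p}\neq\emptyset$ — indeed $p\in\MLR_\nu$ directly.

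Now apply Theorem~\ref{thm-uniform} to this computable $\nu$ (viewed as a measure on $2^\omega$ via a standard computable encoding of $[0,1]$, or by noting that the theorem goes through verbatim for computable measures on $[0,1]$): there is a single computable measure $\zeta$ on $[0,1]$ such that every $p$ with $\MLR_\nu\cap\MLR_{\mu_p}\neq\emptyset$ satisfies $p\in\MLR_\zeta$. Combining this with the previous step, every computable $p\in[0,1]$ would belong to $\MLR_\zeta$. But no computable real can be Martin-L\"of random with respect to any computable measure unless it is an atom of that measure, and a computable measure on $[0,1]$ can have only countably many atoms, whose total mass is at most $1$; in particular there are only countably many computable reals in $\MLR_\zeta$ — far fewer than the (countably infinite, but with an elementary diagonalization) set of computable reals. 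More carefully: pick a computable real $p$ that is not an atom of $\zeta$ (such $p$ exists since the atoms form a countable set while there are infinitely many computable reals, e.g.\ all dyadic rationals in $(0,1)$, among which at most countably—hence not all—can be atoms; in fact one can just note $\zeta(\{p\})>0$ for all computable $p$ is impossible by countable additivity). For this $p$, $p\notin\MLR_\zeta$ (a computable real in a $\Pi^0_1$ class of $\zeta$-random points forces that singleton to have positive $\zeta$-measure, by the same argument as in the previous theorem's proof), contradicting the conclusion of Theorem~\ref{thm-uniform}.

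The main obstacle is purely expository rather than mathematical: making precise what ``a computable mixture of Bernoulli measures that includes all computable Bernoulli measures'' means, and checking that Theorem~\ref{thm-uniform} — stated for computable measures $\nu$ on $2^\omega$ — applies to the measure-on-$[0,1]$ setting here (or restating it there). Once the definitions are pinned down, the argument is a two-line application of Theorem~\ref{thm-uniform} plus the standard fact that a computable measure has only countably many atoms and that its non-atoms cannot be random relative to it. I would therefore devote most of the write-up to the translation between ``mixture of Bernoulli measures'' and ``computable measure $\nu$ on $[0,1]$'' and to the selection of a suitable computable non-atom $p$, and keep the contradiction step itself brief.
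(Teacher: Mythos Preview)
Your overall strategy matches the paper's: assume such a computable mixture $\xi$ exists, invoke Theorem~\ref{thm-uniform} to get a computable measure $\zeta$ on $[0,1]$ with every computable $p\in\MLR_\zeta$, and derive a contradiction. (A side remark: the paper applies Theorem~\ref{thm-uniform} directly to $\xi$ under the reading ``$\MLR_\xi\cap\MLR_{\mu_c}\neq\emptyset$ for every computable $c$''. Under your stronger reading---that every computable $p$ already lies in $\MLR_\nu$ for the mixing measure $\nu$---the appeal to Theorem~\ref{thm-uniform} is in fact superfluous, since $\nu$ itself would already serve as $\zeta$.)

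The genuine gap is in your final step. You claim a computable non-atom of $\zeta$ exists ``since the atoms form a countable set while there are infinitely many computable reals \dots\ at most countably---hence not all---can be atoms'', and alternatively that ``$\zeta(\{p\})>0$ for all computable $p$ is impossible by countable additivity''. Neither argument works. The computable reals are themselves only countably many, so there is no cardinality mismatch with the (countable) set of atoms; and countable additivity is perfectly consistent with a countably infinite set of atoms of total mass at most $1$. Nothing you have written rules out a computable $\zeta$ whose atoms include every computable real. What is needed---and what the paper supplies with its remark about ``following the least non-ascending path of the $\zeta$-measure''---is an explicit diagonalization: given a computable $\zeta$, effectively build a computable $z$ with $\zeta(\{z\})=0$. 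For instance, having constructed $\sigma=z\uh n$, search (computably, since $\zeta(\tau)<q$ is $\Sigma^0_1$) for some $\tau\succ\sigma$ with $\zeta(\tau)<2^{-n}$; such $\tau$ exists because the $2^{m-|\sigma|}$ extensions of $\sigma$ of length $m$ have total $\zeta$-mass $\zeta(\sigma)\leq 1$, so for large $m$ some extension has mass below $2^{-n}$. Then $z$ is computable, $\zeta(\{z\})=0$, and hence $z\notin\MLR_\zeta$, giving the contradiction. Once you replace your cardinality/additivity appeal with this construction, the proof is complete and essentially identical to the paper's.
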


\begin{proof}
Suppose there is some computable measure $\xi$ such that for all computable $c\in[0,1]$, there is some $x\in\MLR_\xi\cap\MLR_{\mu_c}$.  By Theorem \ref{thm-uniform},  there is a computable measure $\zeta$ on $[0,1]$ such that for
  every $p\in[0,1]$ such that 
  $\MLR_\nu\cap\MLR_{\mu_p}\neq\emptyset$, we have $p\in\MLR_\zeta$.  But then there is a computable measure $\zeta$ such that every computable sequence is random with respect to $\zeta$, which is impossible (one can always construct a computable sequence $z$ such that $\zeta(\{z\})=0$ by following the least non-ascending path of the $\zeta$-measure of members of $\str$).

\end{proof}

\section{The Turing Degrees of Bernoulli Random Sequences}\label{sec-4}

We now consider the Turing degrees of Bernoulli random sequences.  Levin \cite{ZvoLev70} and Kautz \cite{Kau91} independently proved that for every pair of computable measures $\mu$ and $\nu$, there is a $\mu$-almost total functional $\Phi_{\mu\rightarrow\nu}$ such that $\Phi_{\mu\rightarrow\nu}(\MLR_\mu)=\MLR_\nu$.  If, in addition, $\mu$ and $\nu$ are continuous and positive (i.e. $\nu(\sigma)>0$ for all $\sigma\in\str$), then $(\Phi_{\mu\rightarrow\nu})^{-1}=\Phi_{\nu\rightarrow\mu}$.  
 In fact, these result also holds for non-computable measures, at least as long as the two functionals have access to some encoding of $\mu$ and $\nu$ as binary sequences. In our context, to convert between randomness with respect to two Bernoulli measures $\mu_p$ and $\mu_q$ only requires oracle access to the parameters $p$ and $q$.  Thus we have:

\begin{theorem}[Levin \cite{ZvoLev70} / Kautz \cite{Kau91} ]\label{thm-lk}
 Let $p,q\in[0,1]$.
\begin{enumerate}
\item There is a $(p\oplus q)$-computable functional $\Phi_{p\rightarrow q}$ such that $\Phi_{p\rightarrow q}(\MLR_{\mu_p}^{p\oplus q})=\MLR_{\mu_q}^{p\oplus q}$.
\item $(\Phi_{p\rightarrow q})^{-1}=\Phi_{q \rightarrow p}$.

\end{enumerate}
\end{theorem}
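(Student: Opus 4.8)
The plan is to relativize the classical Levin--Kautz coordinate-change construction (recalled just before the statement) to the oracle $p\oplus q$. We may assume $0<p,q<1$, so that $\mu_p$ and $\mu_q$ are continuous and positive, which is exactly the hypothesis under which $(\Phi_{\mu\to\nu})^{-1}=\Phi_{\nu\to\mu}$ holds in the computable case; the boundary parameters $p,q\in\{0,1\}$, where one of the measures is a computable point mass, are degenerate and handled separately. The single fact that powers the relativization is that $\mu_p(\sigma)=p^{\#_0(\sigma)}(1-p)^{\#_1(\sigma)}$, and likewise $\mu_q(\sigma)$, are $(p\oplus q)$-computable reals uniformly in $\sigma\in\str$, so every step below can be performed using $p\oplus q$ as an oracle.

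First I would set up the cumulative-distribution map. For a positive continuous measure $\mu$, let $c_\mu(x)=\mu(\{y\in\cs:y<_{\mathrm{lex}}x\})$. Since $\mu$ is positive and continuous, $c_\mu$ is a strictly increasing continuous injection of $\cs$ into $[0,1]$, it pushes $\mu$ forward to Lebesgue measure, and (away from the countable set $D_\mu$ of endpoints) $\sigma\prec x$ if and only if $c_\mu(x)$ lies in the interval $I^\mu_\sigma$ of length $\mu(\llb\sigma\rrb)$ with left endpoint $\mu(\{y<_{\mathrm{lex}}\sigma\})$. For $\mu=\mu_p$ the real $c_{\mu_p}(x)$ is computable from $x\oplus p$: it is the common limit of the monotone rational approximations $\ell_k=\mu_p(\{y<_{\mathrm{lex}}x\uh k\})$ from below and $u_k=\ell_k+\mu_p(\llb x\uh k\rrb)$ from above, which converge because $\mu_p$ has no atoms. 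Now define $\Phi_{p\to q}(x)$ to be the sequence $y$ with $c_{\mu_q}(y)=c_{\mu_p}(x)$: to output $y(n)$, search for a stage $k$ at which the rational interval $[\ell_k,u_k]$ is contained in the interior of $I^{\mu_q}_{y\uh n\, i}$ for some $i\in\{0,1\}$, and set $y(n)=i$ (all comparisons are between rationals and $(p\oplus q)$-computable reals, so this is a $(p\oplus q)$-effective search). This procedure fails only when $c_{\mu_p}(x)\in D_{\mu_q}$; but $D_{\mu_q}$ is a countable $(p\oplus q)$-computable set of reals, and for each $j\in D_{\mu_q}$ the preimage $c_{\mu_p}^{-1}(j)$ is a single $(p\oplus q)$-computable sequence forming a $\mu_p$-null $\Pi^{0,p\oplus q}_1$ singleton, hence not $\mu_p$-random relative to $p\oplus q$. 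So $\Phi_{p\to q}$ is $\mu_p$-almost total, is total on all of $\MLR_{\mu_p}^{p\oplus q}$, and by construction satisfies $(\mu_p)_{\Phi_{p\to q}}=\mu_q$.

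With $\Phi_{p\to q}$ and the symmetric $\Phi_{q\to p}$ in hand, both parts follow at once. By Theorem~\ref{thm-pres}(i) relativized to $p\oplus q$, $\Phi_{p\to q}(\MLR_{\mu_p}^{p\oplus q})\subseteq\MLR_{\mu_q}^{p\oplus q}$ and $\Phi_{q\to p}(\MLR_{\mu_q}^{p\oplus q})\subseteq\MLR_{\mu_p}^{p\oplus q}$. On the complement of $c_{\mu_p}^{-1}(D_{\mu_q})\cup D_{\mu_p}$ (and symmetrically) --- a set containing, by the argument just given, every sequence random relative to $p\oplus q$ --- the identities $c_{\mu_q}^{-1}\circ c_{\mu_q}=\id$ and $c_{\mu_p}^{-1}\circ c_{\mu_p}=\id$ yield $\Phi_{q\to p}\circ\Phi_{p\to q}=\id$ and $\Phi_{p\to q}\circ\Phi_{q\to p}=\id$, which is part (2); combined with the two inclusions this shows $\Phi_{p\to q}$ maps $\MLR_{\mu_p}^{p\oplus q}$ \emph{onto} $\MLR_{\mu_q}^{p\oplus q}$, which is part (1). (Surjectivity onto the $\mu_q$-randoms can alternatively be read off from No Randomness Ex Nihilo, Theorem~\ref{thm-pres}(ii), relativized to $p\oplus q$.)

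The main obstacle is the bookkeeping around the ``$\mu$-adic rationals'' $D_\mu$: one must verify that the construction really defines a genuine Turing functional (not merely a $\mu_p$-a.e.\ defined map), that the pushforward is \emph{exactly} $\mu_q$, and --- most delicately, for the inverse assertion --- that every sequence that is $\mu_p$- or $\mu_q$-random relative to $p\oplus q$ avoids the relevant countable sets, so that the two compositions are the identity on the randoms and not merely almost everywhere. Everything else is a routine relativization of the computable case.
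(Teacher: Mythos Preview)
Your proposal is correct and follows exactly the route the paper indicates: the paper does not give a proof of this theorem at all, but simply cites the classical Levin--Kautz result for computable measures and remarks that ``these results also hold for non-computable measures, at least as long as the two functionals have access to some encoding of $\mu$ and $\nu$,'' which for Bernoulli measures means oracle access to $p\oplus q$. Your write-up is a faithful and careful unpacking of precisely that relativization via the cumulative-distribution-function construction, including the correct handling of the dyadic endpoint sets and the degenerate boundary parameters.
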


Here we will focus primarily on the conversion between Lebesgue randomness and $p$-randomness for some $p\in[0,1]$.  In particular, the following is an immediate consequence of Theorem \ref{thm-lk}:

\begin{theorem}\label{thm-bernlk}
Let $p\in[0,1]$.
\begin{itemize}
\item[(i)] For every $x\in\MLR_{\mu_p}$ there is some $y\in\MLR^p$ such that
$x\equiv_T y\oplus p$.
\item[(ii)] For every $y\in\MLR^p$ there is some $x\in\MLR_{\mu_p}$ such that $x\equiv_T y\oplus p$.
\end{itemize}
\end{theorem}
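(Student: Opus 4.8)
The plan is to derive Theorem~\ref{thm-bernlk} directly from Theorem~\ref{thm-lk} by applying it with $q$ taken to be a fixed computable parameter, say $q=1/2$, so that $\mu_q=\lambda$ is Lebesgue measure and $\MLR_{\mu_q}^{p\oplus q}=\MLR^{p}$. The only real work is bookkeeping the oracles: since $q=1/2$ is computable, $p\oplus q\equiv_T p$, so every object produced by Theorem~\ref{thm-lk} that is $(p\oplus q)$-computable is in fact $p$-computable, and $\MLR_{\mu_p}^{p\oplus q}=\MLR_{\mu_p}^{p}=\MLR_{\mu_p}$ and $\MLR_\lambda^{p\oplus q}=\MLR^p$.

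For part (i), I would take $x\in\MLR_{\mu_p}$ and set $y=\Phi_{p\rightarrow 1/2}(x)$. By Theorem~\ref{thm-lk}(1), $y\in\MLR^p$, and since $\Phi_{p\rightarrow 1/2}$ is $p$-computable, $y\leq_T x\oplus p$; but $x\geq_T p$ by Theorem~\ref{thm-kj1}, so $x\oplus p\equiv_T x$ and hence $y\leq_T x$. For the reverse reduction, Theorem~\ref{thm-lk}(2) gives $(\Phi_{p\rightarrow 1/2})^{-1}=\Phi_{1/2\rightarrow p}$, so $x=\Phi_{1/2\rightarrow p}(y)$; since $\Phi_{1/2\rightarrow p}$ is $p$-computable, $x\leq_T y\oplus p$. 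Combining, $x\equiv_T y\oplus p$ (and in fact $x\equiv_T y$, though the statement only claims $x\equiv_T y\oplus p$). One caveat: the clause in Theorem~\ref{thm-lk} that guarantees $(\Phi_{p\rightarrow q})^{-1}=\Phi_{q\rightarrow p}$ was stated in the surrounding text under the hypothesis that the measures are continuous and positive; $\lambda$ and $\mu_p$ are continuous, and $\mu_p$ is positive provided $p\in(0,1)$, so the boundary cases $p\in\{0,1\}$ (where $\mu_p$ is a point mass and $\MLR_{\mu_p}=\{0^\omega\}$ or $\{1^\omega\}$) should be checked separately — but there $x$ is computable, $p$ is computable, and one can simply take $y$ to be any element of $\MLR^p=\MLR$, so both parts hold trivially.

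For part (ii), I would take $y\in\MLR^p=\MLR_\lambda^{p\oplus q}$ and set $x=\Phi_{1/2\rightarrow p}(y)=(\Phi_{p\rightarrow 1/2})^{-1}(y)$; by Theorem~\ref{thm-lk}(1) applied in the direction $q\to p$, $x\in\MLR_{\mu_p}$. As above, $x\leq_T y\oplus p$ from $p$-computability of $\Phi_{1/2\rightarrow p}$, and $y=\Phi_{p\rightarrow 1/2}(x)\leq_T x\oplus p$ from $p$-computability of $\Phi_{p\rightarrow 1/2}$; hence $y\oplus p\leq_T x\oplus p$ and $x\oplus p\leq_T y\oplus p$, giving $x\oplus p\equiv_T y\oplus p$. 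Since $x\in\MLR_{\mu_p}$ implies $x\geq_T p$ by Theorem~\ref{thm-kj1}, we get $x\equiv_T x\oplus p\equiv_T y\oplus p$, as required. Again the cases $p\in\{0,1\}$ are handled directly.

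The main obstacle, such as it is, is not a mathematical difficulty but making sure the oracle accounting is airtight: one must invoke Theorem~\ref{thm-kj1} to absorb the oracle $p$ into $x$ on the $\MLR_{\mu_p}$ side, and one must be careful that the conversion functionals of Theorem~\ref{thm-lk} are only genuinely inverse to one another when both measures are continuous and positive, which forces the separate treatment of the degenerate parameters $p=0$ and $p=1$. Everything else is a direct substitution of $q=1/2$ into Theorem~\ref{thm-lk}.
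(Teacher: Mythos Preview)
Your proof is correct and follows essentially the same approach as the paper: apply Theorem~\ref{thm-lk} with $q=1/2$ and use Theorem~\ref{thm-kj1} to absorb the oracle $p$ into $x$; your extra care about the degenerate cases $p\in\{0,1\}$ is a nice addition the paper omits. One small slip: your parenthetical claim in part~(i) that ``in fact $x\equiv_T y$'' is not justified, since $y\in\MLR^p$ need not compute $p$ (so from $x\leq_T y\oplus p$ you cannot conclude $x\leq_T y$); only $x\equiv_T y\oplus p$ follows, which is all that is asserted.
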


\begin{proof}
(i) Let $x\in\MLR_{\mu_p}$.  By Theorem \ref{thm-lk}(1) applied to $q=\frac{1}{2}$, $\Phi_{p\rightarrow q}$ is a $p$-computable functional such that $\Phi_{p\rightarrow q}(x)\in\MLR^p$ since $x\in\MLR_{\mu_p}^{p\oplus q}$; set $y=\Phi_{p\rightarrow q}(x)$.  By Theorem \ref{thm-lk}(2), $\Phi_{q\rightarrow p}(y)=x$, where $\Phi_{q\rightarrow p}$ is $p$-computable.  Thus, we have $x\oplus p\equiv_Ty\oplus p$.   By Theorem \ref{thm-kj1}, since $x\in\MLR_{\mu_p}$, we have $x\geq_T p$, and thus the conclusion follows.

(ii) The argument is nearly identical as the above, except that we first apply the functional $\Phi_{q\rightarrow p}$ to $y\in\MLR^p$ with $q=\frac{1}{2}$ and argue as above.
\end{proof}

\subsection{Comparing degrees of Bernoulli parameters}
We first consider how the Turing comparability of two Bernoulli parameters relates to the Turing comparability of the associated Bernoulli random sequences.

\begin{theorem}\label{thm-berndeg1}
Every $x\in\MLR_{\mu_p}$ computes some $y\in\MLR_{\mu_q}$ if and only if $q\leq_Tp$.
\end{theorem}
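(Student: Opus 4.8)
The plan is to prove both directions by exploiting the conversion functionals of Theorem~\ref{thm-lk} together with Kjos-Hanssen's Theorem~\ref{thm-kj1}. For the easier direction ($\Leftarrow$), suppose $q\leq_T p$ and let $x\in\MLR_{\mu_p}$. By Theorem~\ref{thm-kj1}, $x\geq_T p$, hence $x\geq_T q$, so $x$ has access to both parameters as oracles; applying the $(p\oplus q)$-computable functional $\Phi_{p\rightarrow q}$ from Theorem~\ref{thm-lk}(1) to $x$ (noting $x\in\MLR_{\mu_p}^{p\oplus q}$, since $\MLR_{\mu_p}\subseteq\MLR_{\mu_p}^{p\oplus q}$ as adding $q$ as an extra oracle only shrinks the class) yields $y\eqdef\Phi_{p\rightarrow q}(x)\in\MLR_{\mu_q}^{p\oplus q}$, and since $\MLR_{\mu_q}^{p\oplus q}\subseteq\MLR_{\mu_q}$, in fact $y\in\MLR_{\mu_q}$. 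As the functional is $(p\oplus q)$-computable and $x$ computes $p\oplus q$, we get $x\geq_T y$, as required.

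For the forward direction ($\Rightarrow$), suppose every $x\in\MLR_{\mu_p}$ computes some $y\in\MLR_{\mu_q}$. I first want to produce a \emph{specific} convenient $x$: by Theorem~\ref{thm-bernlk}(ii) applied with the Lebesgue parameter, or directly by Theorem~\ref{thm-lk}, there is an $x\in\MLR_{\mu_p}$ with $x\equiv_T w\oplus p$ for some $w$ that is Martin-L\"of random relative to $p$ — in particular one can arrange that $x$ is ``as close to $p$ as possible'' in the sense that $x\equiv_T p$ whenever such a $p$-random sequence of degree $\deg(p)$ exists; but more robustly, I would instead take $x$ to be a $\mu_p$-random sequence whose only reason to compute anything is through $p$. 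The cleanest route: pick $x\in\MLR_{\mu_p}$ with $x\equiv_T G\oplus p$ where $G$ is sufficiently generic relative to $p$ (or simply any $\MLR^p$ sequence $y$ with $y\oplus p\equiv_T x$ as in Theorem~\ref{thm-bernlk}(i)), so that by van Lambalgen-style considerations the only Bernoulli-random sequences $x$ computes ``come from'' $p$. By hypothesis this $x$ computes some $z\in\MLR_{\mu_q}$, and by Theorem~\ref{thm-kj1} applied to $z$ we get $z\geq_T q$, hence $x\geq_T q$. The final step is to argue that $x\geq_T q$ forces $p\geq_T q$: this follows because $x\equiv_T y\oplus p$ with $y$ Martin-L\"of random relative to $p$, and a Martin-L\"of $p$-random $y$ cannot help $p$ compute the additional information $q$ unless $q\leq_T p$ already — concretely, if $q\leq_T y\oplus p$ then by relativizing the fact that ML-random sequences are of minimal help (more precisely, using that $y$ is $\MLR^p$ and any set computed by $y\oplus p$ but not by $p$ would have to be ``random-ish'', whereas $q$, being the limiting frequency extracted from $z$, is computed by any $\mu_q$-random oracle), one concludes $q\leq_T p$.

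I expect the main obstacle to be the last step of the forward direction: pinning down precisely why $x\geq_T q$ implies $p\geq_T q$. The naive worry is that a $\mu_p$-random $x$ might genuinely compute more than $p$ does. The resolution is to choose $x$ carefully at the outset — not an arbitrary member of $\MLR_{\mu_p}$, but one of the form given by Theorem~\ref{thm-bernlk}, say $x\equiv_T y\oplus p$ with $y\in\MLR^{p}$ chosen to be, e.g., $1$-generic relative to $p$ as well, or (even simpler) to invoke that the \emph{degrees} of members of $\MLR_{\mu_p}$ are exactly $\{\deg(y\oplus p): y\in\MLR^p\}$ and among these there is one with $y\oplus p$ computing no set outside the ``$p$-random cone'' beyond $p$ itself. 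The key lemma to isolate and prove (or cite) is: if $y\in\MLR^p$ and $q\leq_T y\oplus p$, then $q\leq_T p$ — this is where the genuine content lives, and it should follow from the fact that $q$ is computable from \emph{every} $\mu_q$-random sequence (Theorem~\ref{thm-kj1}) combined with a measure-theoretic argument showing $\{y : q\leq_T y\oplus p \text{ and } q\not\leq_T p\}$ is a $\lambda$-null $\Sigma^0_2(p)$ class, hence avoided by the $\MLR^p$ sequence $y$. I would develop that lemma first, then assemble the two directions as above.
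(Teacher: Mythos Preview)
Your $(\Leftarrow)$ direction is essentially fine and close to the paper's (which routes through $\MLR^p$ rather than applying $\Phi_{p\rightarrow q}$ directly, but this is cosmetic). One small slip: you write ``$\MLR_{\mu_p}\subseteq\MLR_{\mu_p}^{p\oplus q}$ as adding $q$ as an extra oracle only shrinks the class'' --- these two clauses contradict each other. What actually saves you is that $q\leq_T p$ gives $p\oplus q\equiv_T p$, so the two classes coincide; you should say that instead.

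The $(\Rightarrow)$ direction has a genuine gap. Your ``key lemma'' --- that if $y\in\MLR^p$ and $q\leq_T y\oplus p$ then $q\leq_T p$ --- is simply false for an individual $y$. Take $p$ computable and $q\in\MLR$; then $y=q\in\MLR^p$ and $q\leq_T y\oplus p$, yet $q\not\leq_T p$. Your attempted rescue, that the set $\{y:q\leq_T y\oplus p\}$ is $\lambda$-null (when $q\not\leq_T p$) and ``hence avoided by the $\MLR^p$ sequence $y$'', also fails: null sets can certainly contain Martin-L\"of randoms unless they are \emph{effectively} null, and this set is not of the right complexity to be covered by a $p$-test.

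The fix is to stop focusing on a single carefully chosen $x$ and instead use the full strength of the hypothesis: \emph{every} $x\in\MLR_{\mu_p}$ computes some $z\in\MLR_{\mu_q}$, hence (via Theorem~\ref{thm-bernlk} and Theorem~\ref{thm-kj1}) \emph{every} $y\in\MLR^p$ satisfies $q\leq_T y\oplus p$. Now you have a measure-one set of such $y$, and Stillwell's generalization of Sacks' theorem (if $\lambda(\{y:a\leq_T b\oplus y\})>0$ then $a\leq_T b$) immediately yields $q\leq_T p$. This is exactly what the paper does. You had the nullity ingredient in hand but were trying to use it in the wrong direction --- the point is not that some random avoids the set, but that the hypothesis forces the set to have full measure, contradicting nullity unless $q\leq_T p$.
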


\begin{proof}
($\Rightarrow$) For this direction, we will use Stillwell's generalization of Sack's Theorem \cite{Sti72}:  If $\lambda(\{x\colon a\leq_T b\oplus x\})>0$, then $a\leq_T b$ (see \cite[Theorem 8.12.6]{DowHir10}).  Now suppose that each $p$-random sequence computes a $q$-random sequence.  Given any $z\in\MLR^p$, by Theorem \ref{thm-bernlk}(ii), there is some $x\in\MLR_{\mu_p}$ such that $z\oplus p\equiv_T x$.  By assumption, $x$ computes some $y\in\MLR_{\mu_q}$, which in turn computes $q$ by Theorem \ref{thm-kj1}.  Thus we have $z\oplus p\geq_Tq$ for every $z\in\MLR^p$.  It follows that $\lambda(\{z\colon q\leq_T z\oplus p\})>0$, and hence by Stillwell's theorem, $q\leq_Tp$.

($\Leftarrow$) Suppose that $q\leq_Tp$.  Given $x\in\MLR_{\mu_p}$, by Theorem \ref{thm-bernlk}(i) there is some $z\in\MLR^p$ such that $z\oplus p\equiv_Tx$.   Since $q\leq_Tp$, $\MLR^p\subseteq \MLR^q$ and hence $z\in\MLR^q$.  Then by  Theorem \ref{thm-bernlk}(ii), there is some $y\in\MLR_{\mu_q}$ such that $y\equiv_Tz\oplus q$, and hence we have 
\[
y\equiv_Tz\oplus q\leq_T z\oplus p\equiv x.\qedhere
\]
\end{proof}

Clearly, if every $p$-random sequence is Turing-incomparable with every $q$-random sequence, then $p$ and $q$ are Turing-incomparable.  Interestingly, the converse does not hold:

\begin{proposition}
  For every pair of relatively random $p,q\in[0,1]$, there are
  sequences $x\in\MLR_{\mu_p}$ and $y\in\MLR_{\mu_q}$ such that
  $x\equiv_T y$.
\end{proposition}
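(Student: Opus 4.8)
The plan is to build $x$ and $y$ of common Turing degree by exploiting the conversion machinery of Theorem~\ref{thm-bernlk} together with van Lambalgen's theorem. Recall $p$ and $q$ are relatively random, meaning $p \in \MLR^q$ and $q \in \MLR^p$ (with respect to Lebesgue measure); equivalently, by van Lambalgen, $p \oplus q \in \MLR$. First I would fix a sequence $z$ that is Martin-L\"of random relative to $p \oplus q$, and moreover chosen so that $z \oplus (p \oplus q) \in \MLR$ (again by van Lambalgen, $z \in \MLR^{p\oplus q}$ suffices, and such $z$ exist — e.g.\ take any $z \in \MLR^{p \oplus q}$). The idea is that $z$ will be the ``common random part'' that both $x$ and $y$ are built from.

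Next, since $z \in \MLR^{p \oplus q} \subseteq \MLR^p$, Theorem~\ref{thm-bernlk}(ii) applied with parameter $p$ yields some $x \in \MLR_{\mu_p}$ with $x \equiv_T z \oplus p$. Similarly, since $z \in \MLR^{p\oplus q} \subseteq \MLR^q$, Theorem~\ref{thm-bernlk}(ii) applied with parameter $q$ yields some $y \in \MLR_{\mu_q}$ with $y \equiv_T z \oplus q$. So it remains to show $z \oplus p \equiv_T z \oplus q$. This is where relative randomness does the work: since $p \in \MLR^q$ and $z \in \MLR^{p \oplus q}$, van Lambalgen's theorem (in the appropriate iterated form) gives that $p \in \MLR^{z \oplus q}$, i.e.\ $p$ is random relative to $z \oplus q$; I would then invoke a Sacks/Stillwell-style argument — or more directly, the fact that a sequence random relative to an oracle $w$ cannot be used together with $w$ to compute anything not already $w$-computable in a way that would force $z \oplus q \geq_T z \oplus q \oplus p$ to collapse — hmm, let me reconsider: actually the cleaner route is to observe that from $z \oplus q$ and the randomness of $p$ over it we do \emph{not} immediately get $p \leq_T z \oplus q$, so $z \oplus p$ and $z \oplus q$ need not be equal as they stand.

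Because of that subtlety, I would instead set up the construction symmetrically from the start. Using Theorem~\ref{thm-lk}, there is a $(p \oplus q)$-computable functional $\Phi_{p \to q}$ with a $(p\oplus q)$-computable inverse $\Phi_{q \to p}$ converting between $\MLR_{\mu_p}^{p \oplus q}$ and $\MLR_{\mu_q}^{p \oplus q}$. The plan is: start with $z \in \MLR^{p \oplus q}$, convert it (relative to $p \oplus q$) to some $x \in \MLR_{\mu_p}^{p\oplus q}$; by relative randomness of $p,q$ and van Lambalgen, the oracle $p \oplus q$ is ``harmless'' in the sense that $x \in \MLR_{\mu_p}$ outright (a blind/relativized randomness sequence random over $p \oplus q$ that is $\mu_p$-random relative to $p\oplus q$ is in fact $\mu_p$-random, since no $\mu_p$-test relative to $p$ can capture it — one checks the test relative to $p\oplus q$ obtained would contradict $z \in \MLR^{p\oplus q}$). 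Then set $y = \Phi_{p\to q}(x)$, so $y \in \MLR_{\mu_q}^{p \oplus q}$, and by the same argument $y \in \MLR_{\mu_q}$. Finally $x \oplus p \oplus q \equiv_T y \oplus p \oplus q$ since the conversions are $(p\oplus q)$-computable; and since $x \geq_T p$ and $y \geq_T q$ by Theorem~\ref{thm-kj1}, combined with $x \geq_T q$ and $y \geq_T p$ (which also follow, since $x$ computes $y$ computes $q$, etc.), we get $x \equiv_T x \oplus p \oplus q \equiv_T y \oplus p \oplus q \equiv_T y$.

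The main obstacle I anticipate is the step asserting that a sequence which is $\mu_p$-random \emph{relative to} $p \oplus q$ is actually $\mu_p$-random (i.e.\ relative to $p$ alone) — this is precisely where the hypothesis that $q$ is random relative to $p$ must be used, via van Lambalgen's theorem: any $\mu_p$-test relative to $p \oplus q$ that captured $x$ would, after integrating out $q$, either already be a $\mu_p$-test relative to $p$ (contradiction) or yield a $(p)$-Martin-L\"of test capturing $q$ (contradicting $q \in \MLR^p$). Making this reduction precise — keeping track of which oracle sees what, and ensuring the measure bounds on the tests are preserved under the relativization shuffle — is the delicate part; everything else is bookkeeping with Theorems~\ref{thm-kj1}, \ref{thm-lk}, and \ref{thm-bernlk}.
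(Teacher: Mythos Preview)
Your proposal has a genuine gap, and it is not where you think it is. The step you flag as the ``main obstacle'' --- showing that $x\in\MLR_{\mu_p}^{p\oplus q}$ implies $x\in\MLR_{\mu_p}$ --- is trivial: every $\mu_p$-test relative to $p$ is in particular a $\mu_p$-test relative to $p\oplus q$, so $\MLR_{\mu_p}^{p\oplus q}\subseteq\MLR_{\mu_p}$ immediately. No van Lambalgen machinery is needed there.

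The actual gap is the circularity in your final degree computation. You correctly obtain $x\oplus p\oplus q\equiv_T y\oplus p\oplus q$, and Theorem~\ref{thm-kj1} gives $x\geq_T p$ and $y\geq_T q$. But to conclude $x\equiv_T y$ you also need $x\geq_T q$, and your justification ``since $x$ computes $y$ computes $q$'' is circular: $y=\Phi_{p\rightarrow q}(x)$ with $\Phi_{p\rightarrow q}$ only $(p\oplus q)$-computable, so $x$ computes $y$ only once you already know $x\geq_T q$. In fact your $x$ satisfies $x\equiv_T z\oplus p$, and for a generic choice of $z\in\MLR^{p\oplus q}$ one has $q\not\leq_T z\oplus p$ (by Stillwell's theorem relative to $p$, the set $\{z:q\leq_T z\oplus p\}$ is null whenever $q\not\leq_T p$). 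So the argument fails as written.

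The paper's proof sidesteps all of this by dispensing with the auxiliary $z$ entirely. The key observation you are missing is that $p$ and $q$ can themselves serve as the random seeds for each other: since $q\in\MLR^p$, Theorem~\ref{thm-bernlk}(ii) gives $x\in\MLR_{\mu_p}$ with $x\equiv_T q\oplus p$; symmetrically, since $p\in\MLR^q$, one gets $y\in\MLR_{\mu_q}$ with $y\equiv_T p\oplus q$. Then $x\equiv_T p\oplus q\equiv_T y$ is immediate. Your instinct to reach for a fresh $z\in\MLR^{p\oplus q}$ is exactly what creates the problem; the hypothesis of relative randomness already hands you the seeds you need.
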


\begin{proof}  Let $p,q\in[0,1]$ satisfy $p\in\MLR^q$ and $q\in\MLR^p$.  Since $p\in\MLR^q$, $\Phi_{\lambda\rightarrow\mu_q}(p)\in\MLR_{\mu_q}$ by Theorem \ref{thm-lk}.  Setting $y=\Phi_{\lambda\rightarrow\mu_q}(p)$, we have $y\equiv_{T}p\oplus q$ by Theorem \ref{thm-bernlk}(ii).  Similarly, since $q\in\MLR^p$, $\Phi_{\lambda\rightarrow\mu_p}(q)\in\MLR_{\mu_p}$ by Theorem \ref{thm-lk}.  Setting $x=\Phi_{\lambda\rightarrow\mu_p}(q)$, again we have $x\equiv_{T}p\oplus q$ by Theorem \ref{thm-bernlk}(ii).  Thus $x\equiv_Tp\oplus q\equiv_T y$.
\end{proof}

\bigskip

\subsection{On Turing degrees containing Bernoulli random sequences}
We now turn to the task of determining which Turing degrees contain a Bernoulli random sequence.  First, note that every Bernoulli random sequence computes a Martin-L\"of random sequence via von Neumann's trick, which is given by the following procedure:  Given a sequence $x$ as input, our procedure reads a pair of bits.  If it reads the pair 01, it outputs a 0, while if it read the pair 10, it outputs a 1; if the procedure reads either 00 or 11, it moves on to the next pair of input bits with no output.  One can verify that this procedure induces the Lebesgue measure, and hence, given any Bernoulli random as input, we get a Martin-L\"of random sequence as the output by the preservation of randomness.

Recall that a sequence $x$ has diagonally noncomputable (DNC) degree if there is some $f\leq_T a$ such that $f(n)\neq\phi_n(n)$ for every $n$ (where $(\phi_i)_{i\in\omega}$ is the standard enumeration of all partial computable functions). As every sequence that computes a subset of a Martin-L\"of random sequence has DNC degree (which can be deduced from results in \cite{Kjo09} and \cite{GreMil11}) we can conclude:

\begin{proposition}
Every Bernoulli random sequence has DNC degree.
\end{proposition}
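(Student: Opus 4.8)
The plan is to combine the two facts that immediately precede the statement. First I would recall that by von Neumann's trick (described just above), every Bernoulli random sequence $x$ computes a Martin-L\"of random sequence $w$: the Turing functional implementing von Neumann extraction pushes $\mu_p$ forward to the Lebesgue measure $\lambda$ (for $p\in(0,1)$; the cases $p=0$ and $p=1$ have no Bernoulli random sequences at all, so they are vacuous), so by preservation of randomness (Theorem~\ref{thm-pres}(i)), $w=\Phi(x)\in\MLR_\lambda$, and of course $w\leq_T x$.

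Next I would invoke the cited fact that every sequence which computes a subset of a Martin-L\"of random sequence has DNC degree. Here "subset" should be read via the identification of $w\in\cs$ with the set $\{n: w(n)=1\}$; since $x\geq_T w$, $x$ computes the set coded by $w$, which is (trivially) a subset of itself, and $w$ is Martin-L\"of random. Hence $x$ has DNC degree, which is exactly the conclusion.

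So the proof is essentially two lines: apply von Neumann/preservation of randomness to get a Martin-L\"of random sequence below $x$, then apply the Kjos-Hanssen / Greenberg--Miller result to conclude $x$ has DNC degree. The only point requiring a word of care is the degenerate parameters $p\in\{0,1\}$, where $\MLR_{\mu_p}$ consists of a single computable sequence and the preservation argument needs $\mu_p(\dom\Phi)=1$ to make sense; since there is no Bernoulli random sequence in those cases the statement holds trivially, so I would simply remark that we may assume $p\in(0,1)$. There is no genuine obstacle; the substantive content has been black-boxed into the two results quoted immediately before the statement.
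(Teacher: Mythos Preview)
Your proposal is correct and matches the paper's approach exactly: the paper also derives the proposition immediately from von Neumann's trick (plus preservation of randomness) together with the cited Kjos-Hanssen / Greenberg--Miller fact that anything computing a subset of a Martin-L\"of random has DNC degree. One small inaccuracy worth noting: for $p\in\{0,1\}$ the measure $\mu_p$ is a Dirac measure, so its unique random sequence is the computable constant sequence---these cases are not vacuous as you assert (and in fact are degenerate counterexamples to the proposition as literally stated, a point the paper itself elides).
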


Having DNC degree, however, does not characterize the degrees of Bernoulli random sequences.  

\begin{proposition}
There is a DNC degree that does not contain a Bernoulli random sequence.
\end{proposition}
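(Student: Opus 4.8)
The plan is to construct a DNC degree that fails to compute any DNC function of a suitably fast-growing bound, since Bernoulli random sequences compute such ``highly DNC'' functions. Recall from the preceding proposition that every Bernoulli random sequence $x$ computes a Martin-L\"of random sequence $y$ via von Neumann's trick, and then $y$ (being $2$-random or at least ML-random) computes a DNC function $f$ that is bounded by a computable function $h$ — indeed, from the cited results of Kjos-Hanssen and Greenberg--Miller, any sequence computing a subset of an ML-random sequence computes a function $g$ with $g(n) \ne \varphi_n(n)$ that moreover satisfies an effective growth bound, i.e.\ $g$ is $\mathrm{DNC}_h$ for some computable $h$. So it suffices to exhibit a Turing degree $\mathbf{a}$ that computes a DNC function but computes no $\mathrm{DNC}_h$ function for the relevant computable bound $h$ (equivalently, no $\mathrm{DNC}_2$ function after the usual padding, or no function of PA degree relative to nothing — one must be slightly careful which level is needed, but the literature gives that a Bernoulli random computes a $\mathrm{DNC}_h$ function for a fixed computable $h$).

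First I would pin down the exact complexity extracted: combining the von Neumann trick, preservation of randomness (Theorem~\ref{thm-pres}(i)), and the Kjos-Hanssen/Greenberg--Miller machinery, every Bernoulli random sequence computes a $\mathrm{DNC}_h$ function for some computable order $h$ (in fact one can take $h$ primitive recursive). Second, I would invoke the classical separation of the DNC notions: there is a DNC degree that is not of PA degree, and more strongly, by the work on $\mathrm{DNC}_h$ hierarchies (e.g.\ Ambos-Spies--Kjos-Hanssen--Lempp--Slaman, or the slow-growing-bound constructions), for any fixed computable $h$ there is a degree computing a DNC function but no $\mathrm{DNC}_h$ function. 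One realizes such a degree by a finite-extension / forcing argument: build $A$ meeting, for each $e$, a requirement that diagonalizes $\Phi_e^A$ out of being $\mathrm{DNC}_h$ (using that the bound $h$ is small enough that the relevant $\mathrm{DNC}_h$-avoidance is achievable by a $\Pi^0_1$-class argument or a direct diagonalization), while simultaneously coding a fixed DNC function into $A$ so that $A$ itself has DNC degree. Third, conclude: if this $\mathbf{a}$ contained a Bernoulli random sequence, step one would force $\mathbf{a}$ to compute a $\mathrm{DNC}_h$ function, contradicting the construction.

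The main obstacle is the first step — making precise, with a citation-backed bound, that a Bernoulli random computes a $\mathrm{DNC}_h$ function for a \emph{specific} computable $h$, and then matching that $h$ to a known separation result. The subtlety is that ``computes a subset of an ML-random'' yields DNC-ness but the explicit growth rate requires tracing through the Kjos-Hanssen bound; one wants the weakest hypothesis on the random sequence (just ML-randomness of the von Neumann output, not $2$-randomness) to still give an effective bound. Once that bound $h$ is in hand, the existence of a DNC-but-not-$\mathrm{DNC}_h$ degree is standard (it follows, for instance, from the fact that the $\mathrm{DNC}_h$ degrees for slow-growing $h$ form a proper sub-collection of the DNC degrees, and every PA degree is $\mathrm{DNC}_2$ hence $\mathrm{DNC}_h$ after padding, while there are DNC degrees of non-PA — indeed of hyperimmune-free and non-PA — type). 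So the write-up is: (1) extract $\mathrm{DNC}_h$ from Bernoulli randomness with explicit $h$; (2) cite/recall a DNC degree avoiding $\mathrm{DNC}_h$; (3) combine.
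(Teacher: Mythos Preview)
Your route is quite different from the paper's, and as written it has real gaps.

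The paper's proof is a two-liner: Kumabe--Lewis built a DNC degree that is \emph{minimal}; but a Bernoulli random computes a Martin-L\"of random (von Neumann's trick), and Martin-L\"of randoms do not have minimal degree (split $y=y_0\oplus y_1$ and use van Lambalgen), so no Bernoulli random can sit in a minimal degree. Done.

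Your plan instead tries to separate DNC from $\mathrm{DNC}_h$. Two problems:

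\emph{Quantifier mismatch.} From a Martin-L\"of random $y$ with deficiency $c$ you extract a DNC function bounded by roughly $2^{n+c}$; the bound depends on $y$. So Step~1 only gives: for each Bernoulli random $x$ there is \emph{some} computable $h_x$ with $x$ computing a $\mathrm{DNC}_{h_x}$ function. Consequently Step~2 must produce a DNC degree computing no $\mathrm{DNC}_h$ function for \emph{any} computable $h$ --- i.e.\ a DNC degree that is not ``computably-bounded DNC'' at all. That is not the ``standard'' fixed-$h$ separation you cite; it is a strictly stronger statement and needs a genuine construction (bushy-tree forcing \`a la Khan--Miller, or, ironically, Kumabe--Lewis again).

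\emph{The hyperimmune-free remark cuts the wrong way.} You suggest a hyperimmune-free, non-PA DNC degree as a witness. But if $\mathbf{a}$ is hyperimmune-free and computes a DNC function $f$, then $f$ is dominated by some computable $h$, so $f$ itself is $\mathrm{DNC}_{h+1}$. Thus every hyperimmune-free DNC degree \emph{is} computably-bounded DNC, exactly the opposite of what you need.

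A clean salvage of your idea avoids $\mathrm{DNC}_h$ entirely: you already know every Bernoulli random computes a Martin-L\"of random, so it suffices to exhibit a DNC degree that computes no Martin-L\"of random. The simplest witness in the literature is precisely the Kumabe--Lewis minimal DNC degree, which brings you back to the paper's argument.
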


\begin{proof}
As shown by Kumabe and Lewis \cite{KumLew09}, there is a sequence $x$ of DNC degree and minimal degree (if $y\leq_T x$, then either $y$ is computable or $y\equiv_Tx$).  However, no Bernoulli random sequence has minimal degree, since, as noted above, every Bernoulli random sequence computes a Martin-L\"of random sequence.
\end{proof}

Another candidate for characterizing the degrees of Bernoulli random degrees is the collection of Martin-L\"of random degrees, i.e., those Turing degrees that contain a Martin-L\"of random sequence.  As we now show, this characterization does not hold, as there are Bernoulli random degrees that contain no Martin-L\"of random sequence.  Before we prove this result, we first show that for every $p\in[0,1]$, there is some Bernoulli $p$-random that has Martin-L\"of random degree.

\begin{theorem}
For every $p\in[0,1]$, there is some $p$-random sequence $x$ and a Martin-L\"of random sequence $y$ such that $x\equiv_T y$.   
\end{theorem}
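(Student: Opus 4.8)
The plan is to find a single Turing degree that contains both a Bernoulli $p$-random sequence and a Martin-L\"of random sequence (with respect to Lebesgue measure). The degrees that evidently contain such a Martin-L\"of random sequence are those computing $\emptyset'$, by Ku\v{c}era's coding theorem, so I would aim to produce a Bernoulli $p$-random sequence whose degree computes $\emptyset'$. We may assume $p$ is noncomputable: if $p$ is computable then $\MLR^p=\MLR$, and Theorem~\ref{thm-bernlk}(i) already gives, for any $x\in\MLR_{\mu_p}$, a sequence $y\in\MLR$ with $x\equiv_T y\oplus p\equiv_T y$. Note that a noncomputable $p$ necessarily lies in $(0,1)$, so $\mu_p$ is continuous, positive, and $p$-computable.

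First I would build a Bernoulli $p$-random sequence computing $\emptyset'$. Relativizing the Ku\v{c}era--G\'{a}cs theorem to the oracle $p$, every sequence is Turing reducible to some sequence that is Martin-L\"of random relative to $p$; applying this to $p'$ produces $w\in\MLR^p$ with $w\geq_T p'$, and in particular $w\geq_T\emptyset'$. Theorem~\ref{thm-bernlk}(ii) then yields $x\in\MLR_{\mu_p}$ with $x\equiv_T w\oplus p$, and since $w\geq_T p'\geq_T p$ we have $w\oplus p\equiv_T w$, so $x\equiv_T w$ and hence $x\geq_T\emptyset'$. Since $x\geq_T\emptyset'$, Ku\v{c}era's theorem that every set computing $\emptyset'$ is Turing equivalent to some Martin-L\"of random set (see, e.g., \cite{DowHir10}) provides $y\in\MLR$ with $y\equiv_T x$. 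Then $x\in\MLR_{\mu_p}$, $y\in\MLR$, and $x\equiv_T y$, as desired.

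I expect the main obstacle to be recognizing that one cannot read off a Martin-L\"of random sequence directly from the Bernoulli-randomness machinery: Theorem~\ref{thm-bernlk}(ii) only supplies $x\equiv_T w\oplus p$ with $w$ merely Martin-L\"of random relative to $p$, and $w\oplus p$ is in general not Martin-L\"of random at all. Indeed $x\geq_T p$ by Theorem~\ref{thm-kj1}, so for noncomputable $p$ van Lambalgen's theorem rules out any sequence Turing equivalent to $x$ being Martin-L\"of random relative to $p$, and in particular $w\oplus p$ is never Martin-L\"of random. The resolution is to first push $x$ into the cone above $\emptyset'$ (by choosing $w$ to compute $p'$) and only then to invoke Ku\v{c}era's theorem to populate that degree with a genuine Martin-L\"of random sequence.

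An alternative route, bypassing Theorem~\ref{thm-bernlk}, is to apply Ku\v{c}era's coding theorem directly --- relative to $p$ and with respect to the $p$-computable measure $\mu_p$ --- to the $\Pi^{0,p}_1$ class $\cs\setminus\U_1^p$ of $\mu_p$-random sequences (which has $\mu_p$-measure at least $1/2$), coding $p'$ into a member $x$ of that class so that $x\equiv_T p'$, and then using $p'\geq_T\emptyset'$ together with Ku\v{c}era's theorem to obtain $y\in\MLR$ with $y\equiv_T p'\equiv_T x$. Either way, the two essential inputs are the coding of arbitrary sequences (here $p'$) into a member of a positive-measure $\Pi^0_1$ class and the resulting access, via $\emptyset'\leq_T p'$, to the cone where every degree carries a Lebesgue-random representative.
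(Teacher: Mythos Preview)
Your argument follows essentially the same route as the paper's: produce some $w\in\MLR^p$ with $w\oplus p\geq_T\emptyset'$, apply Theorem~\ref{thm-bernlk}(ii) to get $x\in\MLR_{\mu_p}$ with $x\equiv_T w\oplus p$, and then invoke Ku\v{c}era--G\'acs to find $y\in\MLR$ with $y\equiv_T x$. The only difference is in how $w$ is obtained: the paper takes $w=\Omega^p$ and cites the Downey--Hirschfeldt--Miller--Nies result that $\Omega^p\oplus p\equiv_T p'$, whereas you invoke a relativized Ku\v{c}era--G\'acs theorem to code $p'$ below a $p$-random. One small imprecision: relativizing Ku\v{c}era--G\'acs to $p$ yields $p'\leq_T w\oplus p$, not $p'\leq_T w$ as you wrote (the decoding needs the oracle $p$), so the intermediate claim $x\equiv_T w$ is not justified; but this is harmless, since $x\equiv_T w\oplus p\geq_T p'\geq_T\emptyset'$ is all you actually use. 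The paper's choice of $\Omega^p$ sidesteps this bookkeeping and gives the exact degree $p'$ rather than merely a degree above $\emptyset'$.
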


\begin{proof}
For $p\in[0,1]$, consider $\Omega^p=\sum_{U^p(\sigma)\downarrow}2^{-|\sigma|}$, Chaitin's $\Omega$ relative to the oracle $p$, where $U$ is a universal, prefix-free oracle machine.  As shown by Downey, Hirschfeldt, Miller, and Nies \cite{DowHirMil05}, $\Omega^p\in\MLR^p$ and
$\Omega^p\oplus p\equiv_T p'$.  By Theorem \ref{thm-bernlk}(ii), since $\Omega^p\in\MLR^p$, there is some $p$-random sequence $x\equiv_T \Omega_p\oplus p$.  In addition, by the Ku\v cera-G\'acs theorem (\cite{Kuc85},\cite{Gac86}), for every $a\geq_T\emptyset'$, there is some $y\in\MLR$ such that $y\equiv_T a$.  Thus there is some $y\in\MLR$ satisfying $y\equiv_Tp'$, which yields
\[
x\equiv_T (\Omega^p\oplus p)\equiv_T p'\equiv_T y.\qedhere
\]
\end{proof}

Thus, there is no $p\in[0,1]$ with the property that no $p$-random sequence has Martin-L\"of random degree.  However, we have the following:

\begin{theorem}\label{thm-berntd}
There is some $p\in[0,1]$ and some $y\in\MLR_{\mu_p}$ such that there is no $x\in\MLR$ satisfying $x\equiv_Ty$.
\end{theorem}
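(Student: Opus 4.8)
The plan is to build a Bernoulli random sequence whose Turing degree is simultaneously \emph{low} and \emph{of PA degree} (i.e.\ computes a completion of Peano arithmetic). Stephan's theorem --- every Martin-L\"of random set of PA degree computes $\emptyset'$ --- then rules out any Lebesgue Martin-L\"of random sequence from sitting in such a degree, since a low degree cannot compute $\emptyset'$.

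First I would fix the parameter. The class of $\{0,1\}$-valued diagonally noncomputable functions (equivalently, the completions of Peano arithmetic) is a nonempty $\Pi^0_1$ class, so by the Low Basis Theorem it has a low member $p$; as $p$ is noncomputable we may regard it as an irrational real in $[0,1]$ of the same Turing degree, so $p$ is a Bernoulli parameter with $p'\equiv_T\emptyset'$ and $p$ of PA degree. Next, relativizing to $p$ the fact that some Martin-L\"of random real is low (Low Basis Theorem applied to a positive-measure $\Pi^{0,p}_1$ subclass of $\MLR^p$), I would choose $y\in\MLR^p$ with $(p\oplus y)'\equiv_T p'$. Combined with the lowness of $p$ this gives that $p\oplus y$ is low.

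Now I apply Theorem~\ref{thm-bernlk}(ii): since $y\in\MLR^p$, there is some $w\in\MLR_{\mu_p}$ with $w\equiv_T y\oplus p$, and this $w$ is the witness. It is low, and since $p\leq_T w$ (Theorem~\ref{thm-kj1}) and PA degrees are closed upward, $w$ also has PA degree. Suppose toward a contradiction that $z\in\MLR$ and $z\equiv_T w$. Then $z$ is a Lebesgue Martin-L\"of random sequence of PA degree, so $z\geq_T\emptyset'$ by Stephan's theorem, and therefore $z$ is not low --- contradicting $z\equiv_T w$. Hence no $x\in\MLR$ is Turing equivalent to $w$, and $p$, $w$ witness the theorem.

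The difficulty here is in choosing the right invariant rather than in any computation: it must be preserved when one forms the join $y\oplus p$ (so that it transfers to $w$), it must be forceable to be incompatible with computing $\emptyset'$, and it must be incompatible with being a Lebesgue Martin-L\"of random degree. ``Low together with PA degree'' meets all three requirements, the third being precisely Stephan's theorem; the remaining verifications --- that a join of low sets is low, that lowness precludes computing $\emptyset'$, and that the sequence-to-real correspondence preserves degree for noncomputable sequences --- are routine.
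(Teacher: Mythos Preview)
Your proof is correct and takes a genuinely different route from the paper's.

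The paper chooses $p$ to be a $K$-trivial sequence with the property (from Bienvenu--Greenberg--Ku\v cera--Nies--Turetsky) that no Martin-L\"of random $x_0\oplus x_1$ has $p\leq_T x_i$ for both $i$; it then splits a random into four pieces and uses the relativized van Lambalgen and $XYZ$ theorems to argue that at least one of the associated Bernoulli $p$-randoms cannot be Turing equivalent to any $\MLR$ sequence. Your argument instead fixes $p$ of low PA degree, produces a $w\in\MLR_{\mu_p}$ that is itself low and of PA degree, and invokes Stephan's dichotomy to exclude any $\MLR$ sequence from $\deg_T(w)$. Your approach is more elementary: it uses only the Low Basis Theorem (twice) and Stephan's theorem, avoiding both the deep $K$-triviality result and the relativized $XYZ$ machinery. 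On the other hand, the paper's argument immediately yields uncountably many Bernoulli-random degrees containing no $\MLR$ sequence (its Corollary), since every $\MLR$ real splits into four pieces each giving a witness; your construction, relying on the Low Basis Theorem to obtain a single low $y\in\MLR^p$, does not obviously give uncountably many witnesses without further work.

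One expository caution: your final paragraph summarizes part of the verification as ``a join of low sets is low,'' which is false in general. Your actual argument does not use this; it uses the correct chain $(p\oplus y)'\equiv_T p'\equiv_T\emptyset'$, i.e.\ that $y$ is low \emph{over} $p$ and $p$ is low. You should rephrase that sentence to avoid suggesting the false general claim.
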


In the proof of Theorem \ref{thm-berntd}, we will make use of several results. First we have two relativizations of standard results in the theory of algorithmic randomness (the proofs of which are direct relativizations of the proofs of the original theorems).  First, we have the relative version of what is sometimes referred to as the \emph{XYZ Theorem}, originally due to Miller and Yu \cite{MilYu08}.

\begin{theorem}
For $x,y,z,p\in\cs$, if $x\in\MLR^p$ and $x\leq_Ty\oplus p$ for $y\in\MLR^{z\oplus p}$, then $x\in\MLR^{z\oplus p}$.
\end{theorem}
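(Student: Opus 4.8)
This is the relativization to the oracle $p$ of the Miller--Yu ``XYZ theorem,'' and the plan is to relativize their preimage-measure argument, carrying $p$ along as a permanent oracle. Since $x\leq_T y\oplus p$, fix a Turing functional that is $p$-computable, i.e.\ a functional $\Psi$ such that $\Psi^y=x$ where $\Psi$ has $p$ built in as an oracle. Associate to $\Psi$ the function $\rho(\sigma):=\lambda\bigl(\{w\in\cs:\sigma\prec\Psi^w\}\bigr)$ for $\sigma\in\str$. Because $\{w:\sigma\prec\Psi^w\}$ is a $\Sigma^{0,p}_1$ class uniformly in $\sigma$, $\rho$ is a continuous semimeasure that is left-c.e.\ relative to $p$: $\rho(\emptystr)\le 1$ and $\rho(\sigma)\geq\rho(\sigma0)+\rho(\sigma1)$.

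The first step is the key estimate: there is a constant $k$ with $\rho(x\uh n)\leq 2^{-n+k}$ for all $n$. By universality of $\KA^p$ (the $-\log$ of the universal left-c.e.\ continuous $p$-semimeasure) applied to $\rho$, there is a constant $c$ with $\KA^p(\sigma)\leq -\log\rho(\sigma)+c$ for all $\sigma$. On the other hand, since $x\in\MLR^p$, the relativization to $p$ of the characterization of Martin-L\"of randomness via a priori complexity recalled in the introduction (with Bernoulli parameter $\tfrac12$, so that $-\log\mu_{1/2}(x\uh n)=n$) gives a constant $c'$ with $\KA^p(x\uh n)\geq n-c'$ for all $n$; one can also see this directly, since $\bigcup_n\{w:\KA^p(w\uh n)<n-j\}$ is a $p$-Martin-L\"of test, which $x$ must pass. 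Combining the two inequalities along $x$ gives the claim with $k=c+c'$.

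Now suppose toward a contradiction that $x\notin\MLR^{z\oplus p}$, and fix a $(z\oplus p)$-Martin-L\"of test $(\mathcal V_d)_{d\in\omega}$ with $x\in\bigcap_d\mathcal V_d$. I would transport this through $\Psi$ into a $(z\oplus p)$-Martin-L\"of test $(\mathcal W_d)_{d\in\omega}$ that captures $y$. For each $d$, write $\mathcal V_{d+k}=\bigsqcup_j\llb\rho^d_j\rrb$ where $(\rho^d_j)_j$ is an antichain of strings enumerable relative to $z\oplus p$, uniformly in $d$ (every $\Sigma^{0,z\oplus p}_1$ class admits such a decomposition). For a string $\sigma$, let $E_\sigma$ be the portion of $\{w:\sigma\prec\Psi^w\}$ enumerated before its running measure first exceeds $2^{-|\sigma|+k}$; then $E_\sigma$ is $\Sigma^{0,p}_1$ uniformly in $\sigma$, $\lambda(E_\sigma)\leq 2^{-|\sigma|+k}$, and crucially $E_\sigma=\{w:\sigma\prec\Psi^w\}$ whenever $\rho(\sigma)\leq 2^{-|\sigma|+k}$. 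Put $\mathcal W_d:=\bigcup_j E_{\rho^d_j}$, which is $\Sigma^{0,z\oplus p}_1$ uniformly in $d$. Since the $\rho^d_j$ form an antichain the sets $E_{\rho^d_j}$ are pairwise disjoint, so one checks $\lambda(\mathcal W_d)\leq\sum_j 2^{-|\rho^d_j|+k}=2^k\lambda(\mathcal V_{d+k})\leq 2^{-d}$. And $y\in\mathcal W_d$: since $x\in\mathcal V_{d+k}=\bigsqcup_j\llb\rho^d_j\rrb$, $x$ extends exactly one $\rho^d_{j_0}$, which (being a prefix of $x$) equals $x\uh m$ with $m=|\rho^d_{j_0}|$; by the key estimate $\rho(x\uh m)\leq 2^{-m+k}$, so $E_{\rho^d_{j_0}}=\{w:x\uh m\prec\Psi^w\}$, and this contains $y$ because $\Psi^y=x$. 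Thus $y\in\bigcap_d\mathcal W_d$, contradicting $y\in\MLR^{z\oplus p}$, and hence $x\in\MLR^{z\oplus p}$.

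The one genuine subtlety — and the reason for routing through the sets $E_\sigma$ rather than defining $\mathcal W_d$ directly as the $\Psi$-preimage of $\mathcal V_{d+k}$ intersected with $\{w:\rho(\Psi^w\uh n)\text{ is small}\}$ — is that ``$\rho(\sigma)$ is small'' is a $\Pi^{0,p}_1$ rather than $\Sigma^{0,p}_1$ condition, so it cannot appear in the definition of a test. The budgeted-enumeration device $E_\sigma$ sidesteps this at the cost only of a constant-factor loss of measure (absorbed by the index shift $d\mapsto d+k$), while Step~1 guarantees that along $x$ the truncation discards none of the preimage, which is exactly what keeps $y$ inside the test. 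Everything else is a direct relativization.
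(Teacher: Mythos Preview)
Your proof is correct and is precisely the direct relativization of the Miller--Yu preimage-measure argument that the paper invokes; the paper itself gives no proof beyond the remark that the result follows by relativizing the original proof, so your write-up supplies exactly what the paper omits.
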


Next, we have the relative version of van Lambalgen's theorem \cite{Van90}:

\begin{theorem}
For $x,y,p\in\cs$,  $x\oplus y\in\MLR^p$ if and only if $x\in\MLR^{y\oplus p}$ and $y\in\MLR^p$.
\end{theorem}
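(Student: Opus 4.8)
The plan is to prove this as a direct relativization of van Lambalgen's theorem, carrying the fixed oracle $p$ through every test. Throughout, $\MLR^p$ denotes Martin-L\"of randomness with respect to Lebesgue measure $\lambda$ relative to $p$, and $x\oplus y$ denotes the interleaving of $x$ and $y$, so that the even and odd coordinates are $\lambda$-independent. I would treat the two implications separately; in each the real content is a Fubini- or Markov-style measure estimate together with a check that the test I construct is uniformly $\Sigma^{0,\cdot}_1$ with the intended oracle. First I would dispatch the easy half of the forward direction: assuming $x\oplus y\in\MLR^p$, to see $y\in\MLR^p$ I take any $p$-test $(\mathcal{U}_n)_{n\in\omega}$ with $y\in\bigcap_n\mathcal{U}_n$ and cylindrify it in the second coordinate, $\mathcal{V}_n=\{z\oplus w\colon w\in\mathcal{U}_n\}$. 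By independence of the coordinates $\lambda(\mathcal{V}_n)=\lambda(\mathcal{U}_n)\leq 2^{-n}$, the sequence $(\mathcal{V}_n)_{n\in\omega}$ is again uniformly $\Sigma^{0,p}_1$, and $x\oplus y\in\bigcap_n\mathcal{V}_n$, contradicting the randomness of $x\oplus y$.

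The substantive half of the forward direction is $x\in\MLR^{y\oplus p}$, which I would argue contrapositively. Given a $(y\oplus p)$-test $(\mathcal{U}^{y\oplus p}_n)_{n\in\omega}$ with $x\in\bigcap_n\mathcal{U}^{y\oplus p}_n$, I would form the ``integrated'' classes
\[
\mathcal{W}_n=\{z\oplus w\colon z\in\mathcal{U}^{w\oplus p}_n\}.
\]
These are uniformly $\Sigma^{0,p}_1$: to enumerate $\mathcal{W}_n$ one runs the fixed oracle machine defining the test, feeding it $p$ together with the bits of $w$ read off the second coordinate, and whatever cylinder it enumerates from a finite amount of $w$ remains valid for every extension of that finite segment. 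The measure bound is Fubini, using $\lambda(\mathcal{U}^{w\oplus p}_n)\leq 2^{-n}$ for every $w$: $\lambda(\mathcal{W}_n)=\int\lambda(\mathcal{U}^{w\oplus p}_n)\,d\lambda(w)\leq 2^{-n}$. Since $x\in\mathcal{U}^{y\oplus p}_n$ for all $n$, we have $x\oplus y\in\bigcap_n\mathcal{W}_n$, contradicting $x\oplus y\in\MLR^p$.

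For the converse, I would assume $x\in\MLR^{y\oplus p}$ and $y\in\MLR^p$, and suppose toward a contradiction that $x\oplus y\notin\MLR^p$, witnessed by a $p$-test $(\mathcal{U}_n)_{n\in\omega}$. For each $w$ consider the section $(\mathcal{U}_n)_w=\{z\colon z\oplus w\in\mathcal{U}_n\}$; Fubini gives $\int\lambda\bigl((\mathcal{U}_n)_w\bigr)\,d\lambda(w)\leq 2^{-n}$. I would then isolate the $w$ with large sections, $\mathcal{B}_n=\{w\colon\lambda((\mathcal{U}_n)_w)>2^{-n/2}\}$. This class is open and uniformly $\Sigma^{0,p}_1$, since exceeding the threshold is witnessed by a finite portion of the open set $\mathcal{U}_n$ and hence by finitely much of $w$ and $p$, and Markov's inequality yields $\lambda(\mathcal{B}_n)\leq 2^{-n/2}$. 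After reindexing, $(\mathcal{B}_n)_{n\in\omega}$ is a $p$-test, so $y\in\MLR^p$ forces $y\notin\mathcal{B}_n$ for all large $n$, i.e.\ $\lambda((\mathcal{U}_n)_y)\leq 2^{-n/2}$ eventually. The sections $((\mathcal{U}_n)_y)_{n\in\omega}$ are uniformly $\Sigma^{0,y\oplus p}_1$, and $x\in\bigcap_n(\mathcal{U}_n)_y$ because $x\oplus y\in\mathcal{U}_n$; reindexing once more to restore the $2^{-n}$ bound produces a $(y\oplus p)$-test capturing $x$, contradicting $x\in\MLR^{y\oplus p}$.

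The hard part will not be the Fubini and Markov estimates, which are routine, but confirming that each constructed family inherits the correct effective openness: the integrated class $\mathcal{W}_n$ must be seen to be uniformly $\Sigma^{0,p}_1$, which hinges on the monotonicity of the enumeration (a cylinder committed on finite information about $w$ stays committed), the largeness class $\mathcal{B}_n$ must be recognized as $p$-enumerable because largeness of an open section is a finitely witnessed event, and the section class $(\mathcal{U}_n)_y$ must be $\Sigma^{0,y\oplus p}_1$ with a correctly reindexed measure bound. Verifying these uniformity claims carefully is the only place where relativization to $p$ must be tracked with any attention; everything else is the classical argument verbatim.
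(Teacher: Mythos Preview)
Your argument is correct and is exactly the standard relativization the paper has in mind; the paper itself gives no proof of this statement, merely citing \cite{Van90} and noting that the proof is a direct relativization of the unrelativized version.

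One small point worth making explicit: in the contrapositive of the forward direction you assert $\lambda(\mathcal{U}^{w\oplus p}_n)\leq 2^{-n}$ for \emph{every} $w$, but an arbitrary oracle test is only guaranteed to satisfy this bound for the particular oracle $y\oplus p$ it was chosen for. The standard fix is either to take $(\mathcal{U}_n)$ to be a universal test relative to $y\oplus p$ (which, as you know, is uniformly a valid test for every oracle), or to truncate the enumeration of $\mathcal{U}^{w\oplus p}_n$ the moment its measure would exceed $2^{-n}$; either choice leaves the rest of your argument unchanged.
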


Lastly, we will draw upon a recent result involving $K$-trivial sequences.  Recall that a sequence $a\in\cs$ is $K$-trivial if there is some $c\in\omega$ such that 
\[
K(a\uh n)\leq K(n)+c
\]
for all $n\in\omega$ (here $K(\sigma)=\min\{|\tau|\colon U(\tau)=\sigma\}$ is the prefix-free Kolmogorov complexity of $\sigma\in\str$, where $U$ is a universal prefix-free machine; see \cite[Chapter 5]{Nie09} or \cite[Chapter 11]{DowHir10} for more details).  As shown by Nies \cite{Nie05}, $a$ is $K$-trivial if and only if $\MLR^a=\MLR$ (a property referred to as being \emph{low for Martin-L\"of randomness}.

To establish our result, we make use of the following result about $K$-trivial sequences due to Bienvenu, Greenberg, Ku\v cera, Nies, and Turetsky \cite{BieGreKuc16}:

\begin{theorem}\label{thm-ktriv}
There is a $K$-trivial sequence $p$ such that for every Martin-L\"of random $x=x_0\oplus x_1$,  $p\not\leq_Tx_i$ for some $i\in\{0,1\}$.
\end{theorem}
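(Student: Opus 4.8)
Since a computable $p$ is Turing reducible to every sequence, the asserted $p$ must be \emph{noncomputable}; so the goal is to construct a noncomputable $K$-trivial $p$ with the stated covering property. I would first reformulate the conclusion contrapositively and in terms of product randomness. Recall that a pair $(x_0,x_1)$ is Martin-L\"of random for the product measure $\lambda\times\lambda$ on $\cs\times\cs$ if and only if $x_0\oplus x_1\in\MLR$ (a standard consequence of van Lambalgen's theorem, applied with empty oracle). Writing $p\leq_Tx_0$ as $\Phi(x_0)=p$ for some functional $\Phi$, and $p\leq_Tx_1$ as $\Psi(x_1)=p$, it therefore suffices to build $p$ together with, for every pair $(\Phi,\Psi)$ of Turing functionals, a Martin-L\"of test $(\mathcal{G}^{\Phi,\Psi}_k)_{k}$ on $\cs\times\cs$ with $(\lambda\times\lambda)(\mathcal{G}^{\Phi,\Psi}_k)\leq 2^{-k}$ that captures every pair $(x_0,x_1)$ satisfying $\Phi(x_0)=\Psi(x_1)=p$. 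Amalgamating these countably many tests (with weights $2^{-e}$ over an enumeration of functional pairs) into a single product test then shows that no pair computing $p$ on both coordinates can be product-random, which is exactly the desired statement.

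The construction I would run builds $p$ as a $\Delta^0_2$ set with a computable approximation $(p_s)$ by the cost-function (golden-run) method, choosing a cost function satisfying the limit condition so that the resulting $p$ is noncomputable, has finite total cost, and hence is $K$-trivial; I take this characterization of $K$-triviality as standard. Simultaneously, for each pair $(\Phi,\Psi)$ and each stage $s$, I track the \emph{committed} clopen set of pairs $(x_0,x_1)$ whose computations $\Phi(x_0)$ and $\Psi(x_1)$ presently both extend $p_s\uh n$ with use bounded by $s$, for a suitable length $n=n_s$. This set is a finite union of boxes $\llb\sigma\rrb\times\llb\tau\rrb$ of product measure $\alpha_s\cdot\beta_s$, where $\alpha_s,\beta_s$ are the two one-sided measures. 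For the true, final $p$, the sets $\{x_0:\Phi(x_0)\succeq p\uh n\}$ decrease to $\{x_0:\Phi(x_0)=p\}\subseteq\{x_0:p\leq_Tx_0\}$, which is $\lambda$-null by Sacks's theorem; hence $\alpha_s,\beta_s\to0$ along correct stages. Thus for each $k$ there is a stage at which the correct committed box has product measure at most $2^{-k}$, and the construction enumerates that box into $\mathcal{G}^{\Phi,\Psi}_k$, capturing every genuine two-sided computation at every level.

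The delicate point, and the main obstacle, is the \emph{balance} between keeping $p$ $K$-trivial and keeping the tests valid. Securing noncomputability forces infinitely many changes to the approximation $(p_s)$, and whenever $p$ changes below the relevant length the box enumerated for the old (now incorrect) guess becomes wasted product measure that can never be removed from the test. I would define the cost function so that the cost of a change at stage $s$ dominates the one-sided committed measures $\max(\alpha_s,\beta_s)$ summed over requirements; finiteness of the total cost then yields $K$-triviality, while the elementary bound $\alpha_s\beta_s\leq\max(\alpha_s,\beta_s)$ makes the total wasted product measure summable, so that after rescaling each $\mathcal{G}^{\Phi,\Psi}_k$ has measure at most $2^{-k}$. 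The crux is to exhibit a single cost function that \emph{simultaneously} satisfies the limit condition $\lim_x\limsup_s \mathbf{c}(x,s)=0$ (needed to drive noncomputability within a finite budget) and controls the accrued product measures against the thresholds $2^{-k}$; reconciling these two budgets is precisely the content of the \emph{balanced} randomness-test machinery of Bienvenu, Greenberg, Ku\v cera, Nies, and Turetsky, and verifying that the enumerated measures stay below the test levels while the cost stays finite is the heart of the argument.
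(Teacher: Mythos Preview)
The paper does not prove this theorem at all: it is quoted from Bienvenu, Greenberg, Ku\v cera, Nies, and Turetsky \cite{BieGreKuc16} and used as a black box in the proof of Theorem~\ref{thm-berntd}. So there is no ``paper's own proof'' to compare against; your proposal is really a sketch of the cited result.

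As a sketch it is along the right general lines (cost-function construction of a $K$-trivial together with product Martin-L\"of tests), but the specific balancing you propose cannot work. You write that the cost of a change at stage $s$ should dominate $\max(\alpha_s,\beta_s)$ (summed over requirements), and then use $\alpha_s\beta_s\le\max(\alpha_s,\beta_s)$ to bound the wasted product measure. But if the total cost controls $\sum_s \max(\alpha_s,\beta_s)$, then in particular, for a single functional $\Phi$ (take $\Psi=\Phi$, so $\alpha_s=\beta_s$), the one-dimensional wasted measure $\sum_s\alpha_s$ is already bounded by the total cost. The very same enumeration then gives a \emph{one-coordinate} Martin-L\"of test capturing $\{x:\Phi(x)=p\}$ for every $\Phi$, so no Martin-L\"of random would compute $p$ at all. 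That is impossible for a noncomputable $K$-trivial: every $K$-trivial is low, hence $\le_T\emptyset'$, and by Ku\v cera's theorem every set below $\emptyset'$ is computed by some Martin-L\"of random. So a cost function with your stated property cannot satisfy the limit condition while leaving $p$ noncomputable.

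The point of the product is precisely that $\alpha_s\beta_s$ can be small even when $\max(\alpha_s,\beta_s)$ is not; the construction must exploit this quadratic saving rather than discard it via $\alpha_s\beta_s\le\max(\alpha_s,\beta_s)$. In the BGKNT argument the cost one actually pays at a change is tied to the \emph{smaller} side (equivalently, to $\sqrt{\alpha_s\beta_s}$ at the threshold where the box is enumerated), so that bounding the wasted product measure does not force a bound on the one-sided measures. Getting a cost function that simultaneously has the limit condition and charges only the minimum side across all requirements is exactly the delicate part, and it does not follow from the outline you gave; one really needs the layered/balanced test machinery of \cite{BieGreKuc16} (or the equivalent Oberwolfach-randomness formulation) rather than a generic cost-function template.
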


\begin{proof}[Proof of Theorem \ref{thm-berntd}] Fix a $K$-trivial $p$ as in the proof of Theorem \ref{thm-ktriv}.  We claim that $p$ is the desired Bernoulli parameter.  Let $a\oplus b\in\MLR$.  By Theorem \ref{thm-ktriv}, without loss of generality we can assume that $p\not\leq_Ta$.  Splitting $a$ into $a_0\oplus a_1$, we have $p\not\leq_Ta_i$ for $i\in\{0,1\}$.  Moreover, for each $i\in\{0,1\}$, by the relative version of van Lambalgen's theorem and the fact that  $a_0\oplus a_1\in\MLR^p$, we have $a_i\in\MLR^{a_{1-i}\oplus p}$.

Next, we apply the Levin-Kautz theorem:  For each $i\in\{0,1\}$, since $a_i\in\MLR^p$,  by Theorem \ref{thm-bernlk}(ii) there is some $y_i\in\MLR_{\mu_p}$ such that $y_i\equiv_T a_i\oplus p$.  Suppose now for the sake of contradiction that for each $i\in\{0,1\}$, there is some $x_i\in\MLR$ such that $x_i\equiv_T y_i$.  By the relativized version of the XYZ Theorem, since $x_i\in\MLR^p$, $x_i\leq_T a_i\oplus p$, and $a_i\in\MLR^{a_{1-i}\oplus p}$, it follows that $x_i\in\MLR^{a_{1-i}\oplus p}=\MLR^{x_{1-i}}$.  We can thus conclude that $x=x_0\oplus x_1\in\MLR$ by van Lambalgen's theorem.  However, since $p\leq_Ty_i\equiv_T x_i$ for $i\in\{0,1\}$, this contradicts our assumption that $P$ cannot be computed by two halves of a random sequence.  Thus, either $\deg_T(y_0)$ or $\deg_T(y_1)$ is a Bernoulli random Turing degree that contains no Martin-L\"of random sequence.
\end{proof}

The proof of Theorem \ref{thm-berntd} yields many Bernoulli degrees that are not Martin-L\"of random degrees:

\begin{corollary}
There are uncountably many Turing degrees that contain a Bernoulli random sequence and no Martin-L\"of random sequence.
\end{corollary}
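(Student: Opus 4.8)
The plan is to feed the construction from the proof of Theorem~\ref{thm-berntd} a measure-one family of inputs and then close with a short counting argument. Recall that that proof fixes a $K$-trivial $p$ (as in Theorem~\ref{thm-ktriv}) and shows: whenever $a\oplus b\in\MLR$ with $p\not\leq_T a$, then, writing $a=a_0\oplus a_1$, at least one of $\deg_T(a_0\oplus p)$ and $\deg_T(a_1\oplus p)$ contains a Bernoulli $\mu_p$-random sequence and no Martin-L\"of random sequence; call such a Turing degree \emph{bad}. Note this $p$ is necessarily noncomputable, since a computable $p$ would lie below both halves of every Martin-L\"of random sequence, contradicting Theorem~\ref{thm-ktriv}.

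First I would apply the construction with $w:=u$ for each $u\in\MLR$ with $p\not\leq_T u$. Taking $a$ to be the even bits of $u$ and $b$ the odd bits, we get $a\leq_T u$, hence $p\not\leq_T a$, so the hypothesis above is met; writing $a=a_0\oplus a_1$ and setting $\pi_0(u):=a_0$, $\pi_1(u):=a_1$, the maps $\pi_0,\pi_1$ pick out the bits of $u$ in the positions congruent to $0$ and to $2$ modulo $4$. Thus for each such $u$ there is $i\in\{0,1\}$ with $\deg_T(\pi_i(u)\oplus p)$ bad, where each $\pi_i$ is a fixed uniformly computable map with $\pi_i(u)\leq_T u$ that fixes infinitely many bits of $u$.

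Next I would argue by contradiction. Suppose only countably many bad degrees exist; let $\mathbf{E}$ be their union, a countable set of sequences, so $\mathbf{E}':=\{r:r\oplus p\in\mathbf{E}\}$ is countable too. By the previous paragraph, every $u$ with $u\in\MLR$ and $p\not\leq_T u$ has $\pi_0(u)\in\mathbf{E}'$ or $\pi_1(u)\in\mathbf{E}'$. For any fixed $r$, the set $\{u:\pi_i(u)=r\}$ constrains $u$ on an infinite set of coordinates, hence is $\lambda$-null; so $\{u:\pi_i(u)\in\mathbf{E}'\}$ is null for $i\in\{0,1\}$ as a countable union of null sets, and so is their union. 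But $\lambda(\MLR)=1$, while $\{u:p\leq_T u\}$ is null by the theorem of Sacks invoked in the proof of Theorem~\ref{thm-berndeg1} (the case $b=\emptyset$, using that $p$ is noncomputable); so $\{u:u\in\MLR\text{ and }p\not\leq_T u\}$ has measure one. This contradicts the last two sentences, so uncountably many bad degrees exist.

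The point requiring care is lining up the degenerate input $w=u$ with the construction of Theorem~\ref{thm-berntd}: one must check that its hypothesis collapses to the single condition $p\not\leq_T u$ and that the output degree is exactly $\deg_T(\pi_i(u)\oplus p)$ for a rigid quarter-projection $\pi_i$, since it is this rigidity (each $\pi_i(u)=r$ fixing infinitely many bits of $u$) that makes the null-set estimates run. The argument yields ``uncountably many''; obtaining continuum-many bad degrees would instead call for feeding in a perfect set of mutually Martin-L\"of random reals, which is not needed here.
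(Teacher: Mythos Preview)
Your argument is correct and follows the same idea as the paper's proof: feed Martin-L\"of randoms through the construction of Theorem~\ref{thm-berntd} and conclude uncountability by a measure/counting argument. The paper simply records that every $x\in\MLR$, split as $(x_0\oplus x_1)\oplus(x_2\oplus x_3)$, yields a bad degree among the $\deg_T(x_i\oplus p)$ and leaves the counting implicit; you supply exactly that step, and your restriction to $u\in\MLR$ with $p\not\leq_T u$ via Sacks (so only two projections are needed rather than four) is a harmless simplification.
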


\begin{proof}
For every Martin-L\"of random sequence $x$, if we split $x$ into $(x_0\oplus x_1)\oplus(x_2\oplus x_3)$, then at least one of $x_i\oplus p$ (where $p$ is as in the proof of Theorem \ref{thm-berntd}) for $i\in\{0,1,2,3\}$ is Turing equivalent to a $p$-random sequence but not Turing equivalent to a Martin-L\"of random sequence.
\end{proof}

Note that any parameter $p$ for which there is a $p$-random sequence not Turing equivalent to a Martin-L\"of random sequence must of necessity be non-proper.  For given a proper sequence $p$, any $p$-random sequence is proper by Corollary \ref{cor-prop} and hence is Turing equivalent to a Martin-L\"of random sequence (by the Levin-Kautz theorem and randomness preservation).  Characterizing precisely which parameters $p$ yield Theorem \ref{thm-berntd} remains open.

\begin{question}
For which $p\in[0,1]$ is there a Bernoulli $p$-random sequence that is not Turing equivalent to any Martin-L\"of random sequence?
\end{question}

\section{Blind randomness with respect to a Bernoulli measure}\label{sec-5}

In addition to showing Theorem \ref{thm-kj1}, another contribution of \cite{Kjo10} is the introduction of blind randomness and its application to the study of randomness with respect to a Bernoulli measure.
For  a non-computable measure  $\mu$ on $\cs$, a sequence of uniformly $\Sigma^0_1$ classes $(\U_i)_{i\in\omega}$ is called a \emph{blind} $\mu$-Martin-L\"of test if $\mu(\U_i)\leq 2^{-i}$ for every $i\in\omega$; that is, the test does not make use of the measure as an oracle, unlike the standard definition of Martin-L\"of randomness with respect to non-computable measures.  Moreover, we say that $x\in\cs$ is \emph{blind $\mu$-Martin-L\"of random} if $x$ passes every blind $\mu$-Martin-L\"of test.  

In the context of Bernoulli measures, for $p\in[0,1]$, a blind $\mu_p$-Martin-L\"of random test is one that does not use the parameter $p$ as an oracle.  Kjos-Hanssen proved that, for the purposes of defining Bernoulli randomness, having oracle access to the Bernoulli parameter is optional:

\begin{theorem}[Kjos-Hanssen \cite{Kjo10}]\label{thm-kh}
For $p\in[0,1]$, $x\in\cs$ is $\mu_p$-Martin-L\"of random if and only if $x$ is blind $\mu_p$-Martin-L\"of random.
\end{theorem}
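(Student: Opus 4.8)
The ``only if'' direction is immediate: a blind $\mu_p$-Martin-L\"of test is, in particular, a $\mu_p$-Martin-L\"of test relative to $p$ (it simply declines to consult the oracle), so any sequence passing all of the latter passes all of the former. The substance of the theorem is the converse, and the plan is to prove it in contrapositive form: given a $\mu_p$-Martin-L\"of test $(\U_i)_{i\in\omega}$ relative to $p$ with $x\in\bigcap_i\U_i$, I would produce a \emph{blind} $\mu_p$-test also capturing $x$. Since failing some $\mu_p$-test relative to $p$ is the same as failing the universal one, I may assume $(\U_i)_{i\in\omega}$ is the specialization at $p$ of a family $(\U_i^q)_{i\in\omega}$ that is uniformly $\Sigma^0_1$ in the oracle $q$ and satisfies $\mu_q(\U_i^q)\le 2^{-i}$ for every $q$.

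The first move is to reduce to the case in which $x$ is ``frequency-stable'', reusing the machinery from the proof of Theorem \ref{thm-kj1}. That proof supplies a blind test $(\mathcal{V}_d)_{d\in\omega}$ which is simultaneously a $\mu_q$-test for every $q\in[0,1]$, together with functionals $\Theta_d$ such that $y\notin\mathcal{V}_d$ forces $\lim_n \#\{i<n:y(i)=1\}/n$ to exist and to be computed from $y$ via $\Theta_d$. If $x\in\bigcap_d\mathcal{V}_d$ then $x$ already fails the blind test $(\mathcal{V}_d)_{d\in\omega}$ and we are done. Otherwise fix $d$ with $x\in\mathcal{P}_d:=\cs\setminus\mathcal{V}_d$, so that the limiting relative frequency $\rho:=\lim_n \#\{i<n:x(i)=1\}/n$ exists and $x$ computes $\rho$ uniformly via $\Theta_d$.

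The heart of the argument is to ``derelativize'' $(\U_i^p)_{i\in\omega}$ using this $x$-computable handle on the parameter. The point is that on the $\Pi^0_1$ class $\mathcal{P}_d$ the empirical $1$-frequency of a long prefix is a good proxy for $1-p$, and a large-deviation (Hoeffding) estimate makes this quantitative: for a rational subinterval $J\subseteq[0,1]$, the $\mu_q$-measure of the set of $y$ whose length-$n$ prefix has empirical $1$-frequency in $J$ is at most $e^{-cn}$ as soon as $J$ is bounded away from $1-q$, with $c$ depending only on that distance. I would therefore build the blind test $(\W_i)_{i\in\omega}$ level by level: at level $i$ a string $\sigma$ is enumerated into $\W_i$ only after (a) $\sigma$ is certified to lie in $\mathcal{P}_{d,s}$ up to some stage $s$, (b) running $\Theta_d$ on $\sigma$ yields a rational guess for $1-p$ together with a confidence window $J_\sigma$ of length shrinking with $|\sigma|$, and (c) running, for rational parameters $q\in J_\sigma$, the enumeration of $\U_i^q$ and seeing $\sigma$ enter some such $\U_i^q$; one then bounds $\mu_p(\W_i)$ by splitting according to whether $1-p\in J_\sigma$. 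When it is, the contribution is controlled by $\mu_q(\U_i^q)\le 2^{-i}$ for $q$ near $p$, after absorbing the mismatch between $\mu_p$ and $\mu_q$ on the relevant finite length --- which is precisely what the Hoeffding window is calibrated to make summably small; when it is not, the contribution is one of the exponentially small terms. Along $x$ the guesses converge to $\rho$ and $x\in\U_i^p$ for all $i$, so $x\in\bigcap_i\W_i$ provided $\rho=1-p$; and feeding the effective law-of-large-numbers test relative to $p$ through the same construction shows that a blind-random $x$ must have $\rho=1-p$ in the first place, closing the loop.

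The step I expect to be the main obstacle is exactly the $\mu_p$-measure bound on the derelativized test $(\W_i)_{i\in\omega}$. Because oracle computations are not continuous in the oracle, one cannot simply feed an approximation of $p$ into $\U_i^{(\cdot)}$ and hope for the best; the entire delicacy is in choosing the confidence windows $J_\sigma$ and the stage thresholds so that, simultaneously, (i) every wrong guess $q$ contributes only an exponentially small, summable amount of $\mu_p$-mass, and (ii) the correct guesses still leave enough room to certify membership of $x$. Balancing these two competing requirements --- in effect an effective Bayesian posterior-concentration estimate for the de Finetti--Bernoulli setting --- is where the real work lies; everything else is routine relativization together with the trivial direction.
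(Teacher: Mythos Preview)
The paper does not prove this theorem; it is quoted as a result of Kjos-Hanssen \cite{Kjo10} and used without argument. So there is no ``paper's own proof'' to compare against. Your overall architecture---use the blind frequency-stability test $(\mathcal{V}_d)$ from the proof of Theorem~\ref{thm-kj1} to extract the parameter from the sequence, then replace the oracle $p$ in a uniform universal test $(\U_i^q)_{i,q}$ by the self-computed parameter---is exactly the idea behind Kjos-Hanssen's argument. Two points in your execution, however, deserve comment.

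First, your step (c), enumerating $\sigma$ into $\W_i$ whenever $\sigma\in\U_i^q$ for \emph{some} rational $q$ in a confidence window $J_\sigma$, creates a genuine difficulty: you are taking a union $\bigcup_{q\in J_\sigma}\U_i^q$, and nothing prevents its $\mu_p$-measure from being large even when each $\mu_q(\U_i^q)\le 2^{-i}$. The obstacle you flag is real, but it is self-inflicted. The cleaner route avoids windows entirely: feed the bits of $\Theta_i(y)$ (converted to the parameter $1-\Theta_i(y)$) \emph{directly} as the oracle, i.e.\ set
\[
\W_i=\mathcal{V}_i\cup\bigl\{y:\ y\in\U_i^{\,1-\Theta_i(y)}\bigr\}.
\]
For $y\in\mathcal P_i$ the strong law of large numbers gives $\Theta_i(y)=1-p$ for $\mu_p$-almost every $y$, so on that set the second component coincides $\mu_p$-a.e.\ with $\U_i^p$ and has $\mu_p$-measure at most $2^{-i}$; the remainder lies in $\mathcal V_i$, contributing at most another $2^{-i}$. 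No Hoeffding balancing act is needed.

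Second, your ``closing the loop'' step does not work as written. If $x$ has limiting frequency $\rho\neq 1-p$, then running \emph{any} $p$-relative test through your construction replaces the oracle $p$ by $1-\rho$, and in particular the derelativized law-of-large-numbers test becomes the $\mu_{1-\rho}$-SLLN test, which $x$ passes (its frequency \emph{is} $\rho$). So this case needs a direct argument: choose rationals $r,s$ with $\rho<r<s<1-p$ (or symmetrically), and note that the unrelativized sets $\{y:(\exists n\ge N_i)\,\#_1(y\uh n)/n<r\}$, with $N_i$ chosen from the gap $s-r$ alone, form a blind $\mu_p$-test that $x$ fails. This is elementary and does not feed back through the main construction.
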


To date, no initial segment complexity characterization of blind randomness has been provided.  In this section, we show that several reasonable approaches to characterizing blind randomness in terms of initial segment complexity fail to do so.
 Let us review several relevant definitions.  
 
 Recall that a \emph{semimeasure} $\rho:\str\rightarrow[0,1]$ is a function satisfying $\rho(\emptyset)\leq 1$ and $\rho(\sigma)\geq\rho(\sigma0)+\rho(\sigma1)$ for every $\sigma\in\str$.  Moreover, a semimeasure $\rho$ is \emph{left-c.e.}\ if, uniformly in $\sigma$, each value $\rho(\sigma)$ is left-c.e., i.e., the limit of a computable, non-decreasing sequence of rational numbers.  Levin \cite{ZvoLev70} proved the existence of a universal, left-c.e.\ semimeasure $M$:  for every left-c.e.\ semimeasure $\rho$, there is some $c\in\omega$ such that $M\geq c\cdot\rho$.  We then define the \emph{a priori complexity} of $\sigma$ to be $\KA(\sigma):=-\log M(\sigma)$.   These notions are straightforwardly relativizable to any $z\in\cs$.

For $p\in[0,1]$, it is a standard result that a sequence $x\in\cs$ is Bernoulli $p$-random if and only if 
$\KA^p(x\uh n)\geq-\log\mu_p(x\uh n)-O(1)$ (see \cite[Proposition 2.2]{MilRut18} for a proof of the more general statement that holds for all noncomputable measures).  Here we consider those sequences that satisfy
\[
\KA(x\uh n)\geq-\log\mu_p(x\uh n)-O(1),
\]
where we remove the oracle $p$ and consider unrelativized a priori complexity.
  As we now show, this notion is not, in general, sufficient for Bernoulli $p$-randomness.

\begin{theorem}\label{thm-noblind}
  Let $p\in[0,1]$ satisfy $p\geq_T\emptyset'$.  Then there is some
  $y\notin\MLR_{\mu_p}$ such that
  \[
  \KA(y\uh n)\geq-\log\mu_p(y\uh n)-O(1).
  \]
\end{theorem}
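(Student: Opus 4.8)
The plan is to realize $y$ as a path through $\cs$ along which the universal semimeasure $M$ never exceeds a fixed constant multiple of $\mu_p$, while simultaneously arranging $y\leq_T p$ so that $y$ automatically fails $p$-randomness. The engine is the observation that the ratio $r(\sigma):=M(\sigma)/\mu_p(\sigma)$ (well-defined since $p\geq_T\emptyset'$ forces $0<p<1$) is a $\mu_p$-supermartingale: from the semimeasure inequality $M(\sigma)\geq M(\sigma 0)+M(\sigma 1)$ together with $\mu_p(\sigma 0)=p\,\mu_p(\sigma)$ and $\mu_p(\sigma 1)=(1-p)\,\mu_p(\sigma)$ one gets $p\,r(\sigma 0)+(1-p)\,r(\sigma 1)\leq r(\sigma)$, so at least one child satisfies $r(\sigma b)\leq r(\sigma)$. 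This is precisely the ingredient that produces the \emph{correct}, lower-direction bound on $\KA$: following a path along which $r$ never increases (by more than a summable amount) keeps $M(y\uh n)\leq C\,\mu_p(y\uh n)$, i.e.\ $\KA(y\uh n)=-\log M(y\uh n)\geq -\log\mu_p(y\uh n)-O(1)$. Note this is an \emph{upper} bound on $M$ along $y$, which is exactly what the supermartingale path-following delivers (and is where my earlier attempt had the sign reversed).

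Next I would make this path $p$-computable. Since $M$ is left-c.e.\ and every left-c.e.\ real is $\emptyset'$-computable, the hypothesis $p\geq_T\emptyset'$ lets $p$ compute the values $M(\sigma)$ two-sidedly, uniformly in $\sigma$; and $p$ computes the values $\mu_p(\sigma)=p^{\#_0(\sigma)}(1-p)^{\#_1(\sigma)}$. Hence $p$ computes $r(\sigma)$ to any prescribed precision. I would then define $y$ by recursion: given $y\uh n=\sigma$, approximate $r(\sigma 0)$ and $r(\sigma 1)$ to within $2^{-n-2}$ and let the next bit $b$ be the index of the smaller approximation (ties resolved toward $0$). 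A short estimate shows the chosen child satisfies $r(y\uh{n+1})\leq r(y\uh n)+2^{-n-1}$, so $r(y\uh n)\leq r(\emptyset)+\sum_{k\geq 0}2^{-k-1}\leq 2$ for all $n$; thus condition (1) holds with $C=2$. By construction $y\leq_T p$.

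Finally, for the failure of $p$-randomness: because $p$ is noncomputable we have $0<p<1$, so $\mu_p$ is atomless and $\mu_p(y\uh n)\leq(\max(p,1-p))^{n}\to 0$. Then $p$ computes, uniformly in $i$, the least $n_i$ with $\mu_p(y\uh{n_i})\leq 2^{-i}$, and $(\llb y\uh{n_i}\rrb)_{i\in\omega}$ is a $\mu_p$-Martin-L\"of test relative to $p$ that traps $y$, so $y\notin\MLR_{\mu_p}$. The main obstacle, and the whole point of the argument, is securing the \emph{direction} of the complexity estimate — a lower bound on $\KA(y\uh n)$, equivalently the upper bound $M(y\uh n)=O(\mu_p(y\uh n))$ — which the $\mu_p$-supermartingale $r$ and its low-deficiency path provide; this is also exactly where $p\geq_T\emptyset'$ is indispensable, since without it $p$ could not evaluate the left-c.e.\ semimeasure $M$ finely enough to steer $y$ along a path of bounded deficiency.
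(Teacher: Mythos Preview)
Your proof is correct and follows essentially the same route as the paper: both arguments exploit that $r(\sigma)=M(\sigma)/\mu_p(\sigma)$ is a $\mu_p$-supermartingale, so every node has a child where the ratio does not increase, and both use $p\geq_T\emptyset'$ to steer a $p$-computable path along which the ratio stays bounded. The only cosmetic difference is that the paper computes the tree $T=\{\sigma:M(\sigma)\leq 2^c\mu_p(\sigma)\}$ exactly (via $\emptyset'$) and takes its leftmost path, whereas you approximate $r$ at each stage and absorb a summable error; the underlying idea is the same.
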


\begin{proof}
Fix $c\in\omega$ and consider the $\Pi^{0,p}_1$ class 
\[
\mathcal{P}=\{x\in\cs\colon (\forall n)[\KA(x\uh n)\geq-\log\mu_p(x\uh n)-c]\}.
\]
Observe that for  $T=\{\sigma\in\str\colon \colon \KA(\sigma)\geq -\log\mu_p(\sigma)-c\}$, we have $T\leq_T p$; indeed, the predicate $ \KA(\sigma)\geq -\log\mu_p(\sigma)-c$ is computable in $p\oplus \emptyset'\equiv_T p$.  We claim that $T=\{\sigma\in\str\colon (\exists x\in\P)[\sigma\prec x]\}$, the set of extendible  nodes of $\P$. In particular, we show that if $\sigma\in T$, then either $\sigma0$ or $\sigma1$ is in $T$.  

Suppose that for some $\sigma\in T$, $\sigma0\notin T$ and $\sigma1\notin T$.  Since $\sigma\in T$, we have $\KA(\sigma)\geq-\log\mu_p(\sigma)-c$, which implies that 
\begin{equation}\label{eq1}
M(\sigma)\leq 2^c\mu_p(\sigma).
\end{equation}
  For $i\in\{0,1\}$, $\sigma i\notin T$ implies that $\KA(\sigma i)<-\log\mu_p(\sigma i)-c$, and hence we have
\begin{equation}\label{eq2}
M(\sigma i)> 2^c\mu_p(\sigma i)
\end{equation}
for $i\in\{0,1\}$.  Combining Equations (\ref{eq1}) and (\ref{eq2}) and using the properties of measures and semimeasures, we have
\[
2^c\mu_p(\sigma)\geq M(\sigma)\geq M(\sigma 0)+M(\sigma1)>2^c(\mu_p(\sigma0)+\mu_p(\sigma1))=2^c\mu_p(\sigma),
\]
which is impossible.  Thus, either $\sigma0\in T$ or $\sigma1\in T$.  Since every $\sigma\in T$ has an extension in $T$, every $\sigma$ extends to an infinite path through $T$.  Thus the claim follows.

%
As $p\geq_T T=\{\sigma\in\str\colon (\exists x\in\P)[\sigma\prec x]\}$, it follows that $p$ can compute the leftmost path through $\P$.  That is, there is some $y\in \P$ such that (i) $\KA(y\uh n)\geq-\log\mu_p(y\uh n)-c$ for all $n$ and (ii) $y\leq_Tp$, from which it follows that $y\notin\MLR_{\mu_p}$.
\end{proof}

Note that since $\KA(\sigma)\leq K(\sigma)+O(1)$ for every $\sigma\in\str$, we can also conclude that the condition that $K(x\uh n)\geq-\log\mu_p(x\uh n)-O(1)$ does not imply Bernoulli $p$-randomess.

\begin{corollary}
  Let $p\in[0,1]$ satisfy $p\geq_T\emptyset'$.  Then there is some
  $y\notin\MLR_{\mu_p}$ such that
  \[
  K(y\uh n)\geq-\log\mu_p(y\uh n)-O(1).
  \]
\end{corollary}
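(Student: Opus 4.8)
The plan is to derive this immediately from Theorem~\ref{thm-noblind} together with the standard relationship between a priori complexity and prefix-free Kolmogorov complexity. Fix $p\geq_T\emptyset'$ and let $y\in\cs$ be the sequence produced by Theorem~\ref{thm-noblind}, so that $y\notin\MLR_{\mu_p}$ and there is a constant $c$ with $\KA(y\uh n)\geq-\log\mu_p(y\uh n)-c$ for all $n$. The claim is that the \emph{same} sequence $y$ witnesses the corollary.

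The only additional ingredient needed is the inequality $\KA(\sigma)\leq K(\sigma)+O(1)$, valid for all $\sigma\in\str$. This holds because the function $\sigma\mapsto 2^{-K(\sigma)}$ is (up to the usual coding of a discrete semimeasure as a continuous one) a left-c.e.\ semimeasure, and is therefore dominated by the universal left-c.e.\ semimeasure $M$ up to a multiplicative constant; taking $-\log$ of both sides gives the stated bound. Equivalently, there is a constant $c'$ with $K(\sigma)\geq\KA(\sigma)-c'$ for every $\sigma$. Combining the two inequalities,
\[
K(y\uh n)\geq\KA(y\uh n)-c'\geq-\log\mu_p(y\uh n)-c-c',
\]
so $K(y\uh n)\geq-\log\mu_p(y\uh n)-O(1)$ while $y\notin\MLR_{\mu_p}$, as required. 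There is no genuine obstacle here: the result is a direct consequence of Theorem~\ref{thm-noblind} and a textbook inequality, and the remark preceding the corollary already records exactly this observation.
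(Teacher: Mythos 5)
Your proof is correct and is exactly the paper's argument: the corollary is obtained by applying the standard inequality $\KA(\sigma)\leq K(\sigma)+O(1)$ to the sequence $y$ produced by Theorem~\ref{thm-noblind}, precisely as the remark preceding the corollary indicates. Your justification of that inequality via domination of the (coded) discrete semimeasure $2^{-K(\sigma)}$ by the universal left-c.e.\ semimeasure is the standard one and is fine.
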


A third attempt at characterizing  blind $p$-randomness can be given in terms of supermartingales.  Recall that, for a computable measure $\mu$ on $\cs$, a $\mu$-martingale is a computable function $M:\str\rightarrow\mathbb{R}^{\geq 0}$ that satisfies 
\[
M(\sigma)=\dfrac{\mu(\sigma0)}{\mu(\sigma)}M(\sigma0)+\dfrac{\mu(\sigma1)}{\mu(\sigma)}M(\sigma1)
\]
for every $\sigma\in\str$.  It is not hard to show that for every computable $\mu$-martingale $M$, there is some computable measure $\nu$ such that $M=\frac{\nu}{\mu}$.  

Similarly, a c.e.\ $\mu$-supermartingale is a left-c.e.\ function $M:\str\rightarrow\mathbb{R}^{\geq 0}$ that satisfies 
\[
M(\sigma)\geq\dfrac{\mu(\sigma0)}{\mu(\sigma)}M(\sigma0)+\dfrac{\mu(\sigma1)}{\mu(\sigma)}M(\sigma1)
\]
for every $\sigma\in\str$.  As with the case of computable $\mu$-martingales, one can show that for every c.e.\ $\mu$-supermartingale $M$, there is some left-c.e.\ semimeasure $\delta$ such that $M=\frac{\delta}{\mu}$.

Lastly, a $\mu$-martingale $M$ \emph{succeeds} on a sequence $x$ if $\limsup_{n\in\omega} M(x\uh n)=\infty$; one can define success for a supermartingale in the same way.

  In \cite{KjoTavTha14},  Kjos-Hanssen, Taveneaux, and Thapen studied blind $\mu_p$-martingales for $p\in[0,1]$ with the aim of study a blind analogue of computable randomness with respect to the measure $\mu_p$.  Although they did not phrase their definition in this way, one can straightforwardly show that, for each $p\in[0,1]$, a blind $p$-martingale as defined in \cite{KjoTavTha14} has the form $M=\frac{\nu}{\mu_p}$, where $\nu$ is a computable measure (and not a $p$-computable measure).  Similarly, we can define a blind $p$-supermartingale to be any function of the form $M=\frac{\delta}{\mu_p}$, where $\delta$ is a left-c.e.\ semimeasure.  
  
Kjos-Hanssen, Taveneaux, and Thapen showed that there is some for every Martin-L\"of random $p\in[0,1]$, there is a $p$-computable sequence $x$ such that no blind $p$-martingale succeeds on $x$ .  We show a similar but less general result:

\begin{theorem}
  Let $p\in[0,1]$ satisfy $p\geq_T\emptyset'$. Then there is some
  $y\notin\MLR_{\mu_p}$ such that no blind $p$-supermartingale
  succeeds on $y$.
\end{theorem}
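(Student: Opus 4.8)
The plan is to deduce this theorem directly from Theorem~\ref{thm-noblind}, using the correspondence recorded just above between blind $p$-supermartingales and left-c.e.\ semimeasures. First note that since $p\geq_T\emptyset'$ the parameter $p$ is noncomputable, hence $p\in(0,1)$, so $\mu_p(\sigma)>0$ for every $\sigma\in\str$ and every quotient $\delta/\mu_p$ is a well-defined, everywhere-finite function; thus the blind $p$-supermartingales are exactly the functions $\delta/\mu_p$ with $\delta$ a left-c.e.\ semimeasure. I will work with the universal left-c.e.\ semimeasure $M$, recalling that $\KA(\sigma)=-\log M(\sigma)$.

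The main step is the observation that, for any $y\in\cs$, no blind $p$-supermartingale succeeds on $y$ if and only if $\KA(y\uh n)\geq-\log\mu_p(y\uh n)-O(1)$. In one direction, if this complexity inequality holds with constant $c$, then $M(y\uh n)\leq 2^c\mu_p(y\uh n)$ for all $n$; for an arbitrary left-c.e.\ semimeasure $\delta$, universality provides a constant $c_\delta$ with $\delta\leq c_\delta M$, whence $\delta(y\uh n)/\mu_p(y\uh n)\leq c_\delta 2^c$ for all $n$, so the $\limsup$ is finite and $\delta/\mu_p$ does not succeed on $y$. Conversely, if the complexity inequality fails, then $M(y\uh n)/\mu_p(y\uh n)$ is unbounded, and since $M$ is itself a left-c.e.\ semimeasure, the blind $p$-supermartingale $M/\mu_p$ witnesses success on $y$. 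The only mildly delicate point here is the elementary fact that a nonnegative real sequence has finite $\limsup$ precisely when it is bounded, which is what licenses passing between ``does not succeed'' and ``$M/\mu_p$ bounded along $y$''.

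Finally I will invoke Theorem~\ref{thm-noblind}: since $p\geq_T\emptyset'$, it furnishes some $y\notin\MLR_{\mu_p}$ with $\KA(y\uh n)\geq-\log\mu_p(y\uh n)-O(1)$, and by the equivalence just established no blind $p$-supermartingale succeeds on this $y$. I do not expect a genuine obstacle: all the combinatorial content already resides in the construction of the leftmost path through the $\Pi^{0,p}_1$ class in the proof of Theorem~\ref{thm-noblind}, and the present argument is merely the translation of that a priori complexity bound into the language of blind supermartingales via the universal semimeasure.
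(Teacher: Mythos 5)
Your proposal is correct and follows essentially the same route as the paper: take the $y$ produced by Theorem~\ref{thm-noblind} and use universality of the left-c.e.\ semimeasure to translate the a priori complexity lower bound $\KA(y\uh n)\geq-\log\mu_p(y\uh n)-O(1)$ into boundedness of $\delta(y\uh n)/\mu_p(y\uh n)$ for every left-c.e.\ semimeasure $\delta$, so that no blind $p$-supermartingale succeeds on $y$. The paper presents this as a proof by contradiction (a succeeding supermartingale would violate the complexity bound), which is just the contrapositive of the implication you prove directly, and your extra remarks (that $\mu_p$ is nonvanishing since $p$ is noncomputable, and the unused converse direction via $M/\mu_p$) are accurate.
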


\begin{proof}
  Let $y$ be as in the proof of Theorem \ref{thm-noblind}.  Suppose there is a blind
  $p$-supermartingale $M$ that succeeds on $y$.  That is, for every
  $d\in\omega$, there is some $n\in\omega$ such that $M(y\uh n)>2^d$.
  Since $M=\frac{\delta}{\mu_p}$ for some left-c.e.\ semimeasure
  $\delta$, we have that for every $c\in\omega$, there is some
  $n\in\omega$ such that
  \[
  \frac{\delta(y\uh n)}{\mu_p(y\uh n)}>2^d.
  \]
  Applying the negative logarithm to both sides yields
  \begin{equation}\label{eq3}
  -\log\delta(y\uh n)<-\log\mu_p(y\uh n)-d.
  \end{equation}
 By the optimality of the universal
  left-c.e.\ semimeasure, there is some $e\in\omega$ such that
  $2^e\cdot M\geq \delta$, or equivalently,
  $-\log M(\sigma)\leq -\log \delta(\sigma) + e$.  Combining this with
  Equation (\ref{eq3}) yields
  \[
  -\log M(y\uh n)<-\log\mu_p(y\uh n)-(d-e).
  \]
  Thus, for every sufficiently large $c\in\omega$, there is some
  $n\in\omega$ such that
  \[
  \KA(y\uh n)<-\log\mu_p(y\uh n)-c,
  \]
  which contradicts Theorem \ref{thm-noblind}
\end{proof}

Note that 	the choice of $p$ in Theorem \ref{thm-noblind} requires that $p\geq_T\emptyset'$.  We do not know whether we can drop this assumption.

\begin{question}
  For each noncomputable $p\in[0,1]$, is there some $y\notin\MLR_{\mu_p}$
  such that
  \[
  \KA(y\uh n)\geq-\log\mu_p(y\uh n)-O(1)?
  \]
\end{question}

\section*{Acknowledgement}

Many thanks to Quinn Culver for extremely helpful conversations during the early phases of this project.

\bibliographystyle{alpha} \bibliography{Bernoulli}

\end{document}